\def\qed{\hfill $\Box$}
\def\qqed{\hfill $\blacksquare$}
\newcommand{\R}{\mathbf{R}}
\newcommand{\Z}{\mathbf{Z}}
\newcommand{\I}{\mathcal{I}}
\newcommand{\B}{\mathcal{B}}
\newcommand{\X}{\mathcal{X}}
\DeclareMathOperator{\dom}{dom}
\newcommand{\VIAP}{\textrm{VIAP}}
\newcommand{\argmin}{\operatornamewithlimits{argmin}}
\newcommand{\argmax}{\operatornamewithlimits{argmax}}
\newtheorem{theorem}{\bfseries Theorem} 
\newtheorem{lemma}[theorem]{\bfseries Lemma} 
\theoremstyle{definition}
\newtheorem{remark}[theorem]{\bfseries Remark} 
\crefname{theorem}{Theorem}{Theorems}
\crefname{proposition}{Proposition}{Propositions}
\crefname{lemma}{Lemma}{Lemmas}
\crefname{remark}{Remark}{Remarks}
\crefname{section}{Section}{Sections}
\crefname{figure}{Figure}{Figures}
\begin{document}
	\title{Optimal matroid bases with intersection constraints: \\Valuated matroids, M-convex functions,
and their applications\thanks{A preliminary version of this paper appears in the proceedings of the 16th Annual Conference on Theory and Applications of Models of Computation (TAMC 2020).
This work was done while Yuni Iwamasa was at National Institute of Informatics.}}
	\author{Yuni Iwamasa\thanks{Department of Communications and Computer Engineering, Graduate School of Informatics, Kyoto University, Kyoto 606-8501, Japan.
			Email: \texttt{iwamasa@i.kyoto-u.ac.jp}}
		\and Kenjiro Takazawa\thanks{Department of Industrial and Systems Engineering,
			Faculty of Science and Engineering,
			Hosei University, Tokyo
			184-8584, Japan.
			Email: \texttt{takazawa@hosei.ac.jp}}}
	\date{\today}
	\maketitle

\begin{abstract}
For two matroids $M_1$ and $M_2$ with the same ground set $V$ 
and 
two cost functions $w_1$ and $w_2$ on $2^V$, 
we consider 
the problem of finding bases $X_1$ of $M_1$ and $X_2$ of $M_2$ 
minimizing $w_1(X_1)+w_2(X_2)$
subject to a certain cardinality constraint on their intersection $X_1 \cap X_2$. 
For this problem,
Lendl, Peis, and Timmermans (2019) 
discussed modular cost functions:
they 
reduced the problem to weighted matroid intersection for the case 
where the cardinality constraint is $|X_1 \cap X_2|\le k$ or $|X_1 \cap X_2|\ge k$; 
and 
designed a new primal-dual algorithm 
for the case where the constraint is $|X_1 \cap X_2|=k$.

The aim of this paper is to generalize the problems to have nonlinear convex cost functions, 
and to comprehend them from the viewpoint of discrete convex analysis. 
We prove that each generalized problem can be solved via valuated independent assignment, valuated matroid intersection, 
or $\mathrm{M}$-convex submodular flow, 
to offer a comprehensive understanding of weighted matroid intersection with 
intersection constraints.
We also show the NP-hardness of some variants of these problems, 
which clarifies the coverage of discrete convex analysis for those problems. 
Finally, 
we present applications of our generalized problems in the recoverable robust matroid basis problem, combinatorial optimization problems with interaction costs, and matroid congestion games.
\end{abstract}
\begin{quote}
	{\bf Keywords: }
	Valuated independent assignment, valuated matroid intersection, M-convex submodular flow, recoverable robust matroid basis problem, combinatorial optimization problem with interaction costs, congestion game
\end{quote}
\section{Introduction}
\label{SECintro}
{\it Weighted matroid intersection} is one of the most fundamental combinatorial optimization problems solvable in polynomial time.
This problem generalizes a number of tractable problems
including the maximum-weight bipartite matching and minimum-weight arborescence problems.
The comprehension of mathematical structures of weighted matroid intersection, e.g., Edmonds' intersection theorem~\cite{Edm70} and combinatorial primal-dual algorithm~\cite{Fra81,JORSJ/IT76,MP/L75}, 
contributes to the development of algorithmics in combinatorial optimization
as well as matroid theory.

In this article,
we represent a {\it matroid}
by a pair of its ground finite set $V$ and  {\it base family} $\B \subseteq 2^V$, 
which satisfy the following exchange axiom:
for $X, Y \in \B$ and $v \in X \setminus Y$,
there exists $u \in Y \setminus X$ such that
$X \setminus \{v\} \cup \{u\} \in \B$.
For a base family $\B$,
the set family $\I := \{ I \subseteq B \mid B \in \B \}$ is called the {\it independent set family}. 
It is well known (see e.g., \cite{book/Oxley11}) that 
the base family uniquely determines the corresponding independent set family, and vice versa.
Hence in this paper we also represent a matroid by the pair $(V, \I)$ of the ground set $V$ and the independent set family $\I$.

Let $M_1 = (V, \mathcal{B}_1)$ and $M_2 = (V, \mathcal{B}_2)$ be 
matroids
on $V$ with base families $\mathcal{B}_1$ and $\mathcal{B}_2$, respectively. 
Also let $w_1$ and $w_2$ be 
weight functions
on $V$
and 
$k$ a nonnegative integer. 
A weight function $w : V \rightarrow \R$ is also regarded as a {\it modular} function $w: 2^V \rightarrow \R$ 
defined by $w(X) = \sum_{v \in X} w(v)$ for each $X \subseteq V$. 
Recently, Lendl, Peis, and Timmermans~\cite{arxiv/LPT19}
have introduced the following variants of weighted matroid intersection,
in which a cardinality constraint is imposed on the intersection:
\begin{align*}
({\rm W}_{=k}) \qquad
&\begin{array}{lll}
\text{Minimize} \quad& w_1(X_1) + w_2(X_2)\\
\text{subject to} \quad& X_i \in \mathcal{B}_i \quad &(i = 1,2),\\
& |X_1 \cap X_2| = k.
\end{array} 
\\
({\rm W}_{\geq k}) \qquad
&\begin{array}{lll}
\text{Minimize} \quad& w_1(X_1) + w_2(X_2)\\
\text{subject to} \quad& X_i \in \mathcal{B}_i \quad &(i = 1,2),
\\
&|X_1 \cap X_2| \geq k.
\end{array}
\\
({\rm W}_{\leq k}) \qquad
&\begin{array}{lll}
\text{Minimize} \quad& w_1(X_1) + w_2(X_2)\\
\text{subject to} \quad& X_i \in \mathcal{B}_i \quad &(i = 1,2),\\
& |X_1 \cap X_2| \le k. 
\end{array}
\end{align*}
We remark here that the tractability of $({\rm W}_{=k})$
implies that of $({\rm W}_{\geq k})$ and $({\rm W}_{\leq k})$.
Indeed, for example,
we obtain an optimal solution for $({\rm W}_{\geq k})$ for $k = \ell$
by solving $({\rm W}_{=k})$
for
$k = \ell, \ell + 1, \dots, |V|$
and returning a minimum solution over them.

The motivation of these problems comes from 
the \emph{recoverable robust matroid basis problem}~\cite{book/Busing11}. 
Lendl et al.~\cite{arxiv/LPT19} showed that
$({\rm W}_{=k})$, $({\rm W}_{\geq k})$, and $({\rm W}_{\leq k})$
are strongly polynomial-time solvable:
they developed a new primal-dual algorithm
for $({\rm W}_{=k})$; 
and 
reduced 
$({\rm W}_{\geq k})$ and~$({\rm W}_{\leq k})$ to
weighted matroid intersection. 
By this result,
they affirmatively settled an open question on the strongly polynomial-time solvability of
the recoverable robust matroid basis problem under interval uncertainty representation~\cite{HKZ17JOCO,HKZ17OL}.

Lendl et al.~\cite{arxiv/LPT19} further discussed 
two kinds of generalizations of the above problems.
One is to consider more than two matroids. 
Let $n$ be a positive integer, 
and $[n] := \{1,2,\ldots,n\}$. 
For each $i \in [n]$, 
let 
$M_i = (V, \mathcal{B}_i)$ 
be a matroid with ground set $V$ and base family of $\B_i$. 
For instance,
$({\rm W}_{\leq k})$ can be generalized as follows. 
\begin{align*}
({\rm W}_{\leq k}^n) \qquad
&\begin{array}{lll}
\text{Minimize} \quad& \displaystyle\sum_{i =1}^n w_i(X_i) &\\
\text{subject to} \quad& X_i \in \mathcal{B}_i & (i \in [n]),\\
& \displaystyle\left|\bigcap_{i=1}^n X_i \right| \le k.&
\end{array}
\end{align*}
Generalizations of $({\rm W}_{=k})$ and $({\rm W}_{\geq k})$, which we name $({\rm W}_{=k}^n)$ and $({\rm W}_{\geq k}^n)$, respectively,
can be obtained in the same way. 
Lendl et al.~\cite{arxiv/LPT19} proved that 
$({\rm W}_{= k}^n)$ and $({\rm W}_{\geq k}^n)$ are NP-hard, 
whereas
$({\rm W}_{\leq k}^n)$ can be solved in strongly polynomial time. 
Indeed, 
they 
reduced $({\rm W}_{\leq k}^n)$ to
weighted matroid intersection.

The other is a polymatroidal generalization.
Let $B_1, B_2 \subseteq \Z^V$ be the base polytopes of some polymatroids on the ground set $V$. 
The following problem generalizes $({\rm W}_{\geq k})$, 
where 
$w_1$ and $w_2$ are linear functions on $\Z^V$.
\begin{align*}
({\rm P}_{\geq k}) \qquad
\begin{array}{lll}
\text{Minimize} \quad& w_1(x_1) + w_2(x_2)&\\
\text{subject to} \quad& x_i \in B_i \quad &(i = 1,2),\\
&\displaystyle\sum_{v \in V} \min \{ x_1(v), x_2(v) \} \geq k.&
\end{array}
\end{align*}
Again, 
generalizations of $({\rm W}_{=k})$ and $({\rm W}_{\leq k})$
can be obtained in the same manner,
and they 
are NP-hard.
Lendl et al.~\cite{arxiv/LPT19} proved that 
$({\rm P}_{\geq k})$ can be reduced to the \emph{polymatroidal flow problem}~\cite{Has82,LM82a,LM82b}, 
which is equivalent to the 
\emph{submodular flow problem} \cite{EG77} (see~\cite{book/Fujishige05}),
and thus can be solved in strongly polynomial time.

The aim of this paper is to offer a comprehensive understanding of the 
above problems 
in view of {\it discrete convex analysis} (\emph{DCA})~\cite{MP/M98,book/Murota03}, particularly focusing on {\it M-convexity}~\cite{AM/M96}.
DCA provides a theory of convex functions on the integer lattice $\Z^V$. 
M-convex functions 
play central roles in DCA and 
naturally appear in 
various research fields such as 
combinatorial optimization, economics, and game theory~\cite{incollection/M09,JMID/M16}.

M-convex functions are  
a quantitative generalization of matroids. 
The formal definition of M-convex functions is given as follows.
A function $f : \Z^V \rightarrow \R \cup \{ +\infty \}$ is said to be {\it M-convex}
if it satisfies the following generalization of the matroid exchange axiom:
for all $x = (x(v))_{v \in V}$ and $y = (y(v))_{v \in V}$ with $x, y \in \dom f$,
and all $v\in V$ with $x(v) > y(v)$,
there exists $u\in V$ with $x(u) < y(u)$
such that
\begin{align*}
    f(x) + f(y) \geq f(x - \chi_v + \chi_u) + f(y + \chi_v - \chi_u),
\end{align*}
where $\dom f$ denotes the effective domain $\{ x \in \Z^V \mid f(x) < +\infty \}$
of $f$
and $\chi_v$ the $v$-th unit vector for $v \in V$.
In particular,
if $\dom f$
is included in the hypercube $\{0,1\}^V$,
then $f$ is called a {\it valuated matroid}
\footnote{The original definition of a valuated matroid is an {\it M-concave function},
i.e., the negation of an M-convex function,
whose effective domain is included in the hypercube.}~\cite{AML/DW90,AM/DW92}.

In this paper,
we address 
the following $\mathrm{M}$-convex (and hence nonlinear) generalizations of $({\rm W}_{=k})$, $({\rm W}_{\geq k})$, $({\rm W}_{\leq k}^n)$, and $({\rm P}_{\geq k})$, 
and present their applications.
Let $\omega_1,\omega_2, \ldots, \omega_n$ 
be valuated matroids on $2^V$,
where we identify $2^V$ with $\{0,1\}^V$
by the natural correspondence between $X \subseteq V$ and $x \in \{0,1\}^V$; $x(v) = 1$ if and only if $v \in X$.
\begin{itemize}
    \item
    For $({\rm W}_{=k})$ and $({\rm W}_{\geq k})$, 
    by
    generalizing the weight functions $w_1$ and $w_2$
to valuated matroids $\omega_1$ and $\omega_2$, 
    we obtain: 
\begin{align*}
({\rm V}_{=k}) \qquad
\begin{array}{ll}
\text{Minimize} \quad& \omega_1(X_1) + \omega_2(X_2)\\
\text{subject to} \quad& |X_1 \cap X_2| = k;
\end{array}\\
({\rm V}_{\geq k}) \qquad
\begin{array}{ll}
\text{Minimize} \quad& \omega_1(X_1) + \omega_2(X_2)\\
\text{subject to} \quad& |X_1 \cap X_2| \geq k.
\end{array}
\end{align*}
Again observe that the tractability of~$({\rm V}_{=k})$
implies that of~$({\rm V}_{\geq k})$.

\item
For $({\rm W}_{\leq k}^n)$ (and hence~$({\rm W}_{\leq k})$ as well), 
in addition to generalizing $w_1, w_2, \dots, w_n$ to valuated matroids $\omega_1,\omega_2, \ldots, \omega_n$, 
we generalize the cardinality constraint $|\bigcap_{i = 1}^n X_i| \leq k$ 
to a matroid constraint. 
Namely, 
let $M = (V, \mathcal{I})$ be a new matroid,  
where $\mathcal{I}$ denotes its independent set family, 
and generalize~$({\rm W}_{\leq k}^n)$ as follows. 
\begin{align*}
({\rm V}_{\I}^n) \qquad
\begin{array}{ll}
\text{Minimize} \quad&\displaystyle\sum_{i = 1}^n \omega_i(X_i)\\
\text{subject to} \quad& \displaystyle\bigcap_{i=1}^n X_i \in \I.
\end{array}
\end{align*}

\item
It is also reasonable to take the intersection constraint 
into the objective function. 
Let $w\colon V \to \R$ be a 
weight function. 
The next problem is a variant of $({\rm V}_{\I}^n)$. 
\begin{align*}
({\rm V}^n(w)) \qquad
\begin{array}{ll}
\text{Minimize} \quad&\displaystyle\sum_{i = 1}^n \omega_i(X_i) + w\left( \bigcap_{i = 1}^n X_i \right).
\end{array}
\end{align*}

\item
Let $f_1$ and $f_2$ be M-convex functions on $\Z^V$
such that
$\dom f_1$ and $\dom f_2$ form the base polytopes of some polymatroids.
Also let $w \colon \Z^V \to \R$ be a 
linear function. 
Then, the following problem is a common generalization of $({\rm P}_{\geq k})$
and $({\rm V}_{\geq k})$. 
\begin{align*}
({\rm M}_{\geq k}(w)) \qquad
\begin{array}{ll}
\text{Minimize} \quad& f_1(x_1) + f_2(x_2) + w(\min \{x_1,x_2\})\\
\text{subject to} \quad&\displaystyle \sum_{v \in V} \min \{x_1(v), x_2(v) \} \geq k, 
\end{array}
\end{align*}
where $\min \{ x_1, x_2 \}\in \Z^V$ is a vector defined by 
$\min \{ x_1, x_2 \}=(\min \{x_1(v), x_2(v)\})_{v \in V} $.
\end{itemize}
The relations among the above problems
are given in Figure~\ref{fig:relation}.

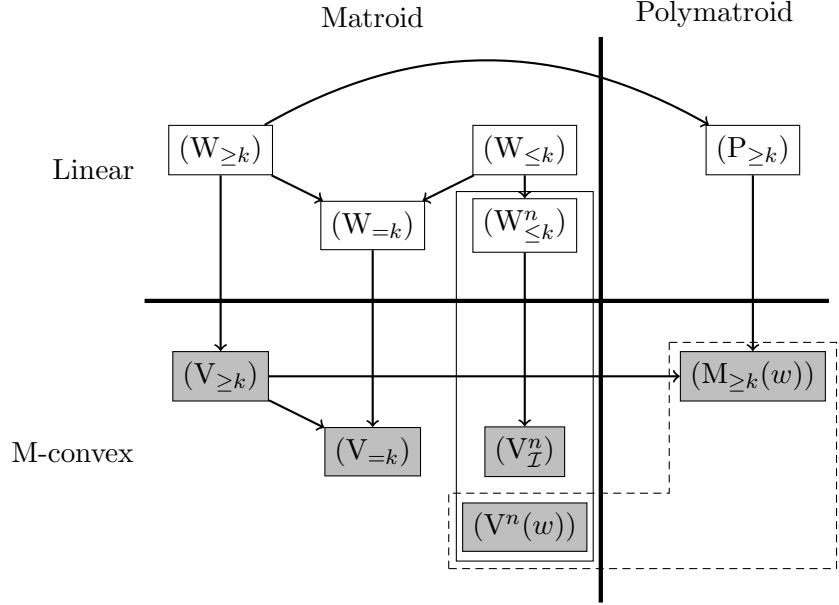
\begin{figure}
	\centering
	\begin{tikzpicture}[LPT/.style={draw, rectangle}, IT/.style={draw, fill=lightgray, rectangle}, edge/.style={thick, ->}]
	\coordinate (O) at (0,0);
	\coordinate (U) at ($ (O) + (0, 3.5) $);
	\coordinate (D) at ($ (O) - (0, 4) $);
	\coordinate (L) at ($ (O) - (6, 0) $);
	\coordinate (R) at ($ (O) + (3, 0) $);
	\coordinate (UL) at ($(U) + (L)$);
	\coordinate (DL) at ($(D) + (L)$);
	\coordinate (UR) at ($(U) + (R)$);
	\coordinate (DR) at ($(D) + (R)$);
	\coordinate [label=above:{Matroid}] (cube) at ($ (U)!0.5!(UL) $);
	\coordinate [label=above:{Polymatroid}] (poly) at ($ (U)!0.5!(UR) $);
	\coordinate [label=left:{Linear}] (linear) at ($ (UL)!0.5!(L) $);
	\coordinate [label=left:{M-convex}] (M-convex) at ($ (DL)!0.5!(L) $);
 	\draw[ultra thick] (U) -- (D);
 	\draw[ultra thick] (L) -- (R);
 	\node[LPT] (p1) at ($ (O) - (3, -1) $) {$({\rm W}_{=k})$};
 	\node[LPT] (p2) at ($ (p1) + (-2, 1) $) {$({\rm W}_{\geq k})$};
 	\node[LPT] (p3) at ($ (p1) + (2, 1) $) {$({\rm W}_{\leq k})$};
    \node[LPT] (p4) at ($ (O) + (2, 2) $) {$({\rm P}_{\geq k})$};
	\node[LPT] (p5) at ($ (p3) + (0, -1) $) {$({\rm W}_{\leq k}^n)$};
    \node[IT] (p6) at ($ (O) - (3, 2) $) {$({\rm V}_{=k})$};
    \node[IT] (p7) at ($ (p6) + (-2, 1) $) {$({\rm V}_{\geq k})$};
    \node[IT] (p8) at ($ (p6) + (2, 0) $) {$({\rm V}_{\I}^n)$};
	\node[IT] (p9) at ($ (p8) + (0, -1) $) {$({\rm V}^n(w))$};
 	\node[IT] (p10) at ($ (O) + (2, -1) $) {$({\rm M}_{\geq k}(w))$};
    
    \draw ($(p5) - (0.9, -0.45)$) rectangle ($(p9) + (0.9, -0.45)$);
    \draw[densely dashed] ($(p9) - (1, 0.55)$) -| ($(p10) + (1.1, 0.45)$) -- ($(p10) + (-1.1, 0.45)$);
    \draw[densely dashed] ($(p9) - (1, 0.55)$) -- ($(p9) - (1, -0.45)$) -| ($(p10) + (-1.1, 0.45)$);
	
	\draw[edge] (p2) -- (p1);
	\draw[edge] (p3) -- (p1);
	\draw[thick, ->, bend left] (p2) to (p4);
	\draw[edge] (p3) -- (p5);
	\draw[edge] (p1) -- (p6);
	\draw[edge] (p2) to (p7);
	\draw[edge] (p7) -- (p6);
	\draw[edge] (p5) -- (p8);
	\draw[edge] (p7) -- (p10);
	\draw[edge] (p4) -- (p10);
	\end{tikzpicture}
	\caption{This figure shows the relations among the problems discussed in this paper.
	The problems filled with gray are 
	new problems introduced in this paper.
	Each directed solid edge means that
	the problem at its head is a generalization of that at its tail.
	The terms ``Matroid'' and ``Polymatroid'' in the figure represent that
	the effective domain of the objective functions of the problem
	is essentially included in $\{0,1\}^V$ and in $\Z^V$,
	respectively.
	The terms ``Linear'' and ``M-convex'' represent that
	the functions used in the problem are linear (or modular) and M-convex (or valuated matroids), respectively.
	In $({\rm W}_{\leq k}^n)$, $({\rm V}_{\I}^n)$, and $({\rm V}^n(w))$,
	which are included in the solid rectangle,
	more than two modular functions or valuated matroids can appear in the 
	objective function.
	In $({\rm V}^n(w))$ and~$({\rm M}_{\geq k}(w))$,
	which are included in the dotted polygon,
	an additional modular/linear function $w$ appears in the 
	objective function.
	}
	\label{fig:relation}
\end{figure}

Our main contribution is to show the tractability of these generalized problems.
\begin{theorem}
\label{thm:P}
    There exist strongly polynomial-time algorithms to solve
   $({\rm V}_{=k})$, $({\rm V}_{\geq k})$, $({\rm V}_{\I}^n)$,
   and
    $({\rm V}^n(w))$ for $w \ge 0$, 
    and
    a weakly polynomial-time algorithm to
    solve $({\rm M}_{\geq k}(w))$ for $w \le 0$. 
\end{theorem}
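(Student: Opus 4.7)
The plan is to reduce each of the five problems to a well-studied tractable problem in discrete convex analysis: $({\rm V}_{=k})$ and $({\rm V}_{\geq k})$ to the valuated independent assignment problem ($\VIAP$); $({\rm V}_{\I}^n)$ and $({\rm V}^n(w))$ (for $w \ge 0$) to valuated matroid intersection; and $({\rm M}_{\geq k}(w))$ (for $w \le 0$) to the M-convex submodular flow problem. All three target problems admit (weakly or strongly) polynomial-time algorithms in DCA, so the tractability claims reduce to verifying that the constructed instances have polynomial size and truly lie within the required framework.

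For $({\rm V}_{=k})$, I would take two disjoint copies $V^{(1)}, V^{(2)}$ of $V$, place $\omega_i$ on $V^{(i)}$, and introduce the ``identity'' bipartite graph with edge set $\{(v^{(1)}, v^{(2)}) : v \in V\}$ of zero weight. Feasible solutions are in one-to-one correspondence with pairs of bases $X_i$ of $M_i$ together with a matching of size exactly $k$ incident only to the shared coordinates. Since $\VIAP$ with a prescribed matching size is known to be strongly polynomial-time solvable, this yields the desired algorithm. The case $({\rm V}_{\geq k})$ then follows by iterating over $k' \in \{k, k+1, \dots, |V|\}$ and returning the best solution, as already observed in the paper.

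For $({\rm V}_{\I}^n)$ and $({\rm V}^n(w))$ with $w \ge 0$, I would construct a single valuated matroid intersection instance on the disjoint union of $n$ copies of $V$. On one side I use the direct sum $\omega_1 \oplus \cdots \oplus \omega_n$, which remains a valuated matroid. On the other side I use an auxiliary matroid that, for each $v \in V$, links the $n$ copies of $v$ together so that the ``simultaneously chosen'' pattern corresponding to $\bigcap_i X_i$ is forced to be independent in $M$ (for $({\rm V}_{\I}^n)$) or carries the additional cost $w(\bigcap_i X_i)$ (for $({\rm V}^n(w))$). The assumption $w \ge 0$ is crucial: it ensures that the encoded cost contribution for ``being chosen in all $n$ copies'' preserves the M-concavity required on that side.

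Finally, for $({\rm M}_{\geq k}(w))$ with $w \le 0$, I would introduce an auxiliary vector $y \in \Z^V$ and reformulate the problem as minimizing $f_1(x_1) + f_2(x_2) + w(y)$ subject to $y \le x_1$, $y \le x_2$, and $\sum_{v \in V} y(v) \ge k$. Since $w \le 0$, every optimum may be taken with $y = \min\{x_1, x_2\}$, so the reformulation is equivalent to the original. The resulting instance can then be cast as an M-convex submodular flow problem on a network linking three copies of $V$ with a source/sink that enforces the $\ge k$ constraint. The main obstacle here, and the reason for the sign conditions throughout, is precisely the M-convexity verification: if $w$ had the opposite sign, the $\min\{x_1, x_2\}$ coupling would behave concavely and destroy the M-convexity needed for the submodular flow framework, and similarly in $({\rm V}^n(w))$. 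Checking that each construction above indeed produces a valid instance of the corresponding DCA problem, and tracking the resulting running times, constitutes the bulk of the technical work.
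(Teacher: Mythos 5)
Your overall architecture matches the paper's: $({\rm V}_{\geq k})$ via valuated independent assignment on the identity bipartite graph with zero edge weights, $({\rm V}_{\I}^n)$ and $({\rm V}^n(w))$ via valuated matroid intersection on the disjoint union of $n$ copies of $V$ (direct sum $\omega_1\oplus\cdots\oplus\omega_n$ on one side, a structure linking the $n$ copies of each $v$ on the other), and $({\rm M}_{\geq k}(w))$ via M${}^\natural$-convex submodular flow, with the sign condition $w\le 0$ used exactly as you describe to argue that the coupling variable can be pushed up to $\min\{x_1,x_2\}$ at an optimum. For the last three problems your sketch defers the actual verifications --- that the linking family is the base family of a matroid, and that the $\tilde w$-term is laminar convex (hence M-convex on the relevant hyperplane) when $w\ge 0$ --- which is where most of the paper's technical work lies, but the route is the same.

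There is, however, a genuine gap in your treatment of $({\rm V}_{=k})$. In $\VIAP(k)$ the matching $F$ satisfies $|F|=k$ and $\partial F\subseteq X_1\cup X_2$; with the identity bipartite graph this only forces $|X_1\cap X_2|\ge k$, not $|X_1\cap X_2|=k$, because a pair with strictly larger intersection still admits a matching of size exactly $k$ among its common elements. So your claimed one-to-one correspondence fails, and the instance you build is a reduction of $({\rm V}_{\geq k})$, not of $({\rm V}_{=k})$. (Your direction of derivation is also reversed: it is $({\rm V}_{\geq k})$ that reduces to $({\rm V}_{=k})$ by iterating over $k'$, not the other way around, so the iteration does not rescue the argument.) The paper closes this gap by exploiting the structure of the augmenting-path algorithm for $\VIAP(k)$, which produces a sequence of optimal solutions for $({\rm V}_{\geq j})$ with $|X_1^j\cap X_2^j|=j$ for increasing $j$: if the unconstrained minimizers satisfy $|X_1\cap X_2|\le k$ one reads off the solution at intersection size exactly $k$, and if $|X_1\cap X_2|>k$ one passes to the dual valuated matroid $\overline{\omega_2}(X)=\omega_2(V\setminus X)$ and solves $({\rm V}_{=r_1-k})$ for $\omega_1$ and $\overline{\omega_2}$. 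Some such additional argument is indispensable; the bare $\VIAP(k)$ reduction does not deliver the equality constraint.
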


The algorithms for $({\rm V}_{=k})$ and 
$({\rm V}_{\geq k})$ are based on 
\emph{valuated independent assignment}~\cite{SIDMA/M96_1,SIDMA/M96_2}, 
that for $({\rm V}_{\I}^n)$ and $({\rm V}^n(w))$ on \emph{valuated matroid intersection}~\cite{SIDMA/M96_1,SIDMA/M96_2}, 
and 
that for~$({\rm M}_{\geq k}(w))$ on \emph{M${}^\natural$-convex submodular flow}~\cite{Comb/M99}. 
We remark that 
valuated independent assignment generalizes valuated matroid intersection, 
and 
M${}^\natural$-convex submodular flow 
is a further generalization.
Besides this, 
the following facts are of theoretical interest. 
\begin{itemize}
    \item 
    If we apply our algorithm for $({\rm V}_{=k})$ to 
    the special case $({\rm W}_{=k})$, 
    we obtain a primal-dual algorithm which is essentially the same 
    as that in~\cite{arxiv/LPT19}, 
    but builds upon a slightly different optimality condition. 
    Details are described in Section~\ref{SEC:LPT}.
    
    \item
    For $({\rm V}^n(w))$ with $w \ge 0$, 
    our reduction results in a problem which is beyond 
    weighted matroid intersection 
    even if $\omega_i$ is a weight function on $V$ with a size constraint, 
    i.e., $\omega_i$ is described as 
    \begin{align}
        \label{eq:size}
    \omega_i(X) = 
    \begin{cases}
    \displaystyle 
    \sum_{v \in X} \omega_i(v) & \text{if $|X|=r_i$},\\
    +\infty & \text{if $|X| \neq r_i$},
    \end{cases}
    \end{align}
    with some nonnegative integer $r_i$
    for each $i \in [n]$. 
That is, 
this special case of 
$({\rm V}^n(w))$ is of interest in the sense that it 
does not require 
matroids to define, 
but requires valuated matroids to solve. 

\item
It might also be interesting that $({\rm V}_{\I}^n)$ can be solved in 
polynomial time when $n \ge 3$, 
in spite of the fact that matroid intersection for more than two matroids is NP-hard. 
\end{itemize}

We also demonstrate that 
the tractability of $({\rm V}^n(w))$ and $({\rm M}_{\geq k}(w))$ 
relies on the assumptions on $w$ 
($w \ge 0$ and $w \le 0$, 
respectively), 
by showing 
their NP-hardness for the general case.

\begin{theorem}
\label{thm:NP}
Problems $({\rm V}^n(w))$ and~$({\rm M}_{\geq k}(w))$ are NP-hard in general.
\end{theorem}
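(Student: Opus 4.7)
The plan is to prove each NP-hardness claim by a Karp reduction from a known hard problem, where the reduction exploits precisely the sign of $w$ excluded in \cref{thm:P}.

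For $({\rm V}^n(w))$, I would reduce from the \emph{3-dimensional matching} problem (3DM) with $n=3$. Given triples $T \subseteq X \times Y \times Z$ with $|X|=|Y|=|Z|=k$, set the ground set $V := T$ and define three partition matroids $M_1, M_2, M_3$ on $V$ whose parts are indexed by the first, second, and third coordinates respectively, each with part-wise capacity~$1$. Each $M_i$ has rank $k$, so every base has exactly $k$ elements. Take $\omega_i$ to be identically zero on $\mathcal{B}_i$ and $+\infty$ elsewhere; this is a valuated matroid because its exchange inequality reduces to the matroid exchange axiom for $M_i$. Choose $w(v) := -1$ for every $v \in V$. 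Then $({\rm V}^3(w))$ asks to maximize $|X_1 \cap X_2 \cap X_3|$ over bases $X_i \in \mathcal{B}_i$. Since $|X_i|=k$ for each $i$, this maximum equals $k$ precisely when $X_1=X_2=X_3$ is a common base of the three matroids, equivalently when $T$ contains a perfect 3-dimensional matching.

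For $({\rm M}_{\geq k}(w))$, I would reduce from $({\rm P}_{=k})$, whose NP-hardness is recorded in the excerpt. Given an instance with polymatroid base polytopes $B_1, B_2$, linear costs $w_1, w_2$, and integer $k$, take $f_i$ equal to $w_i$ on $B_i$ and $+\infty$ outside; each $f_i$ is M-convex. For the $({\rm M}_{\geq k}(w))$ input, use the same $k$ and the linear function $w := M \cdot \mathbf{1}$, where $M$ is chosen strictly larger than the range of $|w_1(x_1)+w_2(x_2)|$ over $(x_1,x_2) \in B_1 \times B_2$ (a polynomial a priori bound obtained from the entries of $w_1, w_2$ and the ranks of the polymatroids). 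Then exceeding $\sum_v \min\{x_1(v), x_2(v)\} = k$ by one unit costs an extra $M$ in the penalty term but can save at most $M-1$ in $f_1 + f_2$, so every optimum of $({\rm M}_{\geq k}(w))$ satisfies the intersection constraint with equality. Hence the two optima differ by exactly $Mk$, completing the reduction.

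The main point—which makes \cref{thm:NP} sharpen \cref{thm:P}—is that each reduction necessarily uses $w$ with the sign excluded in \cref{thm:P}: strictly negative entries in the first case and strictly positive entries in the second. Checking that the $\omega_i$ and $f_i$ satisfy the respective valuated-matroid and M-convexity axioms is routine. I expect the only delicate step to be selecting $M$ in the second reduction so that it is polynomial in the input size yet large enough to force tightness of the intersection constraint; a standard bound in terms of the $\ell_1$-norms of $w_1, w_2$ and the polymatroid ranks suffices.
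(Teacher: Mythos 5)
Your proposal is correct and matches the paper's argument in both halves: for $({\rm V}^n(w))$ the paper likewise takes $n=3$, the indicator valuated matroids $\delta_{\mathcal{B}_i}$ of three base families, and $w\equiv -1$, so that the problem becomes maximizing $|X_1\cap X_2\cap X_3|$, i.e., three-matroid intersection (you merely instantiate this with the 3DM partition matroids); and for $({\rm M}_{\geq k}(w))$ the paper also reduces from the NP-hard equality-constrained polymatroid problem (it fixes $k=0$, i.e., uses $({\rm P}_{=0})$) by choosing $w$ positive and larger than the spread of $f_1+f_2$ so that the intersection constraint is forced tight. The only small point to tidy is the infeasible case: if no pair attains $\sum_{v}\min\{x_1(v),x_2(v)\}=k$, the optimum of your constructed instance is not of the form ``$({\rm P}_{=k})$-optimum plus $Mk$,'' and one instead detects infeasibility from the fact that the optimal value exceeds $Mk$ plus the a priori cost bound — the paper spells this case out explicitly.
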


We then present applications of our generalized problems to
the recoverable robust matroid basis problem,
\emph{combinatorial optimization problems with interaction costs} (\emph{COPIC})~\cite{LCP19},
and
\emph{matroid congestion games}~\cite{ARV08}.  
First 
we provide a generalization of 
a certain class of the recoverable robust matroid basis problem in which the cost functions are
M-convex functions.
This is a special case of
$({\rm M}_{\geq k}(w))$,
and thus can be solved in polynomial time. 
We next
reduce 
a certain generalized case of the COPIC with \emph{diagonal costs} 
to~$({\rm V}^n(w))$ and~$({\rm M}_{\geq k}(w))$, 
to provide a generalized class of COPIC which can be solved in polynomial time.
Finally,
we show that 
computing the socially optimal state in a 
certain generalized model of matroid congestion games 
can be reduced to (a generalized version of) $({\rm V}^n(w))$ for $w \geq 0$, 
and 
thus can be done in polynomial time.

The rest of the paper is organized as follows.
Section~\ref{sec:prel} provides several fundamental facts on 
valuated matroids and
M-convex functions. 
In Section~\ref{sec:VIAP}, 
we present algorithms for solving 
$({\rm V}_{=k})$ and $({\rm V}_{\geq k})$ based on a valuated independent assignment algorithm. 
Sections~\ref{sec:VMI} and~\ref{sec:MCSF} are devoted to the reductions of 
$({\rm V}_{\I}^n)$ and $({\rm V}^n(w))$ 
for $w \ge 0$ to valuated matroid intersection,
and $({\rm M}_{\geq k}(w))$ for $w \le 0$ to M${}^\natural$-convex submodular flow,
respectively. 
We then prove that $({\rm V}^n(w))$ and $({\rm M}_{\geq k}(w))$ are in general NP-hard in Section~\ref{sec:NP}.
In Section~\ref{sec:app},
we present applications of our generalized problems in recoverable robust matroid basis problems, combinatorial optimization problems with interaction costs, and matroid congestion games.
Finally, in Section~\ref{sec:discussion},
we pose 
open problems which look similar to those discussed in this paper.

\section{Preliminaries}\label{sec:prel}
We prepare several facts and terminologies on valuated matroids and
$\mathrm{M}$-convex functions.
We have already used $\Z$ and $\R$ to denote the sets of integers and
real numbers, respectively.
The set of nonnegative integers are denoted by $\Z_+$,
and those of nonnegative real numbers and nonpositive real numbers by
$\R_+$, and $\R_-$,
respectively.
Recall the definition of M-convex functions described in Section \ref{SECintro}.
For an M-convex function $f$,
all members in $\dom f$
have the same ``cardinality,''
that is, there exists some integer $r$
such that $\sum_{v \in V} x(v) = r$ for all $x \in \dom f$.
We refer to $r$ as the {\it rank} of $f$. 
In $({\rm M}_{\geq k}(w))$,
we require that
$\dom f_1$
and $\dom f_2$
form the base polytopes of some polymatroids.
This condition is
equivalent to
$\dom f_1 \subseteq \Z_+^V$ and $\dom f_2 \subseteq \Z_+^V$.

Recall that a valuated matroid is an M-convex function 
defined on $2^V$. 
\emph{Valuated matroid intersection} \cite{SIDMA/M96_1,SIDMA/M96_2} 
is a generalization of weighted matroid intersection defined as follows: 
Given two valuated matroids $\omega_1$ and $\omega_2$ on $2^V$,
find $X \subseteq V$ minimizing the sum $\omega_1(X) + \omega_2(X)$. 

We next define the valuated independent assignment problem \cite{SIDMA/M96_1,SIDMA/M96_2}. 
Let $G = (V_1, V_2; E)$ be a bipartite graph,
$\omega_1 \colon 2^{V_1} \to \mathbf{R}\cup \{+\infty\}$ and $\omega_2 \colon 2^{V_2} \to \mathbf{R}\cup \{+\infty\}$ be valuated matroids, 
and $w \colon E \to \mathbf{R}$ be a weight function.
The {\it valuated independent assignment problem} parameterized by an integer $k$,
referred to as
$\VIAP(k)$,
is described as follows. 
\begin{align*}
\VIAP(k) \qquad
\begin{array}{ll}
\text{Minimize} \quad& \omega_1(X_1) + \omega_2(X_2) + w(F)\\
\text{subject to} \quad&\text{$F\subseteq E$ is a matching of $G$ with $\partial F \subseteq X_1 \cup X_2$},\\
&|F| = k,
\end{array}
\end{align*}
where $\partial F$ denote the set of the endpoints of $F \subseteq E$.
As mentioned in Section \ref{SECintro}, 
$\VIAP(k)$ is a generalization of valuated matroid intersection,
and both of them can be solved in strongly polynomial time~\cite{SIDMA/M96_1,SIDMA/M96_2}.

A function $f : \Z^V \rightarrow \R \cup \{ +\infty \}$ 
is said to be
{\it M${}^\natural$-convex}~\cite{MOR/MS99}
if it satisfies 
the following weaker exchange axiom:
for all $x = (x(v))_{v \in V}$ and $y = (y(v))_{v \in V}$ with 
$x,y \in \dom f$,
and all $v\in V$ with $x(v) > y(v)$,
it holds that
\begin{align*}
    f(x) + f(y) \geq f(x - \chi_v) + f(y + \chi_v),
\end{align*}
or
there exists $u \in V$ with $x(v) < y(v)$
such that
\begin{align*}
    f(x) + f(y) \geq f(x - \chi_v + \chi_u) + f(y + \chi_v - \chi_u).
\end{align*}
It is clear from the definition that M${}^\natural$-convexity slightly generalizes 
 M-convexity, 
while they are known to be essentially equivalent (see,  e.g.,~\cite{book/Murota03}, 
for details).
The following lemma shows one relation between M-convex and M${}^\natural$-convex functions. 
\begin{lemma}[\cite{MOR/MS99}]
\label{lem:MS}
For an M${}^\natural$-convex function $f$ and an integer $r$,
the restriction of $f$ to a hyperplane $\{ x \in \Z^V \mid \sum_{v \in V} x(v) = r \}$
is an M-convex function with rank $r$, 
if its effective domain is nonempty.
\end{lemma}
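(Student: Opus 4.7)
The plan is to exploit the well-known correspondence between M${}^\natural$-convex functions on $\Z^V$ and M-convex functions on $\Z^{V \cup \{v_0\}}$ via ``homogenization'' by an auxiliary coordinate. Under this correspondence, restricting $f$ to the hyperplane $\sum_{v \in V} x(v) = r$ corresponds to restricting an M-convex function to the slice where the auxiliary coordinate is fixed, and the latter is M-convex essentially for free since in the M-exchange axiom the exchange partner cannot be the fixed coordinate.

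Concretely, I would adjoin a fresh element $v_0 \notin V$, set $V_0 := V \cup \{v_0\}$, choose a sufficiently large integer $R$, and define $\tilde f\colon \Z^{V_0} \to \R \cup \{+\infty\}$ by $\tilde f(\tilde x) := f(\tilde x|_V)$ when $\sum_{u \in V_0} \tilde x(u) = R$ and $\tilde f(\tilde x) := +\infty$ otherwise. The first step is to invoke the standard equivalence (see \cite{book/Murota03}) that $\tilde f$ is M-convex on $\Z^{V_0}$ precisely when $f$ is M${}^\natural$-convex on $\Z^V$. Under this identification, the restriction $g$ of $f$ to $H_r := \{ x \in \Z^V : \sum_{v \in V} x(v) = r\}$ coincides, up to discarding the $v_0$-coordinate, with the restriction of $\tilde f$ to the hyperplane $\{ \tilde x \in \Z^{V_0} : \tilde x(v_0) = R - r \}$.

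To verify that $g$ satisfies the M-convex exchange axiom on $V$, I would take $x, y \in \dom g$ and $v \in V$ with $x(v) > y(v)$, lift them to $\tilde x, \tilde y \in \dom \tilde f$ with $\tilde x(v_0) = \tilde y(v_0) = R - r$, and apply the M-convex exchange axiom for $\tilde f$ at coordinate $v$. This yields some $u \in V_0$ with $\tilde x(u) < \tilde y(u)$ for which the exchange inequality holds. Since $\tilde x(v_0) = \tilde y(v_0)$, the strict inequality forces $u \in V$; moreover, the exchanged vectors still have $v_0$-coordinate equal to $R - r$, so projecting back to $V$ delivers the desired exchange inequality for $g$. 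Since every element of $\dom g$ has coordinate sum $r$, the rank is $r$ as claimed.

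The main obstacle is the very first step: the equivalence between M${}^\natural$-convexity of $f$ and M-convexity of its homogenization $\tilde f$. In this paper it may simply be cited from \cite{book/Murota03}, where it is a foundational result; a direct verification would amount to a case analysis matching the two cases of the M${}^\natural$-exchange axiom for $f$ against the single-case M-exchange axiom for $\tilde f$, with the $v_0$ coordinate absorbing the ``degenerate'' case of the former. Once this equivalence is granted, the rest of the argument reduces to the transparent observation that a coordinate held constant cannot play the role of the exchange partner.
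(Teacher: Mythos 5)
The paper gives no proof of this lemma at all---it is quoted directly from \cite{MOR/MS99}---and your homogenization argument is the standard derivation of exactly this fact: adjoin the auxiliary coordinate $v_0$, note that a coordinate with $\tilde x(v_0)=\tilde y(v_0)$ can never serve as the exchange partner $u$ (which requires $\tilde x(u)<\tilde y(u)$ strictly), and project back, with the rank claim immediate since every point of the restricted domain has coordinate sum $r$. The argument is correct and matches the approach of the cited source; the only cosmetic points are that $R$ need not be ``sufficiently large'' (any constant works, the hyperplanes being translates of one another), and that the equivalence between M${}^\natural$-convexity of $f$ and M-convexity of $\tilde f$ that you invoke is itself the main content of \cite{MOR/MS99}, so your proof is best read as a correct unpacking of the citation rather than an independent alternative.
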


We close this section with the definition of  {\it M${}^\natural$-convex submodular flow}~\cite{Comb/M99}. 
Let $f$ be an M${}^\natural$-convex function on $\Z^V$
and
$G = (V, A)$ a directed graph endowed with an upper capacity function $\overline{c} : A \rightarrow \R \cup \{ +\infty \}$,
a lower capacity function $\underline{c} : A \rightarrow \R \cup \{ -\infty \}$,
and a weight function $w : A \rightarrow \R$. 
For a vector $\xi \in \R^A$, 
define its boundary $\partial \xi \in \R^V$ by
\begin{align*}
\partial \xi (v) := \sum\{ \xi(a) \mid a \in A,\ \text{$a$ enters $v$ in $G$} \} - \sum\{ \xi(a) \mid a \in A,\ \text{$a$ leaves $v$ in $G$} \}
\end{align*}
for $v \in V$.
The 
{\it M${}^\natural$-convex submodular flow problem}
for $(f, G)$ is the following problem with variable $\xi \in \R^A$:
\begin{align*}
    \begin{array}{ll}
	\text{Minimize} & \displaystyle\quad f(\partial \xi) + \displaystyle \sum_{a \in A} w(a)\xi(a)\\
	\text{subject to} & \quad \underline{c}(a) \leq \xi(a) \leq \overline{c}(a). 
	\end{array}
\end{align*}
The M${}^\natural$-convex submodular flow problem 
is a further generalization of $\VIAP(k)$, 
and 
can be solved in weakly polynomial time~\cite{MPA/IMM05,SIOPT/IS03}.

\section{Solving $({\rm V}_{=k})$ and $({\rm V}_{\geq k})$ via valuated independent assignment}\label{sec:VIAP}
This section provides strongly polynomial-time algorithms for solving $({\rm V}_{=k})$ and $({\rm V}_{\geq k})$. 
For their special cases $({\rm W}_{=k})$ and $({\rm W}_{\geq k})$,
Lendl et al.~\cite{arxiv/LPT19}
showed the polynomial-time solvability:
they
developed a new algorithm specific to $({\rm W}_{=k})$, 
and 
reduced $({\rm W}_{\geq k})$ to weighted matroid intersection.
In this paper, 
building upon the DCA perspective, 
we show that both of the generalized problems~$({\rm V}_{=k})$ and $({\rm V}_{\geq k})$ fall in the framework of valuated independent assignment. 

\subsection{Strongly polynomial-time algorithms}\label{subsec:strongly poly-time}
We first present an algorithm for $({\rm V}_{\geq k})$. 
Given an instance of $({\rm V}_{\geq k})$, 
construct an instance of $\VIAP(k)$ as follows. 
Set a bipartite graph $G$ by $(V_1, V_2; \{ \{v^1, v^2\} \mid v \in V \})$,
where $V_i$ is a copy of $V$
and $v^i \in V_i$ is a copy of $v \in V$ for $i = 1,2$. 
By abuse of notation,
for $i=1,2$,
a subset $X_i$ of $V$ is regarded as a subset of $V_i$ as well, and 
$\omega_i$ is regarded as a valuated matroid on $2^{V_i}$.
Set $w(e) := 0$ for every edge $e$.
We now obtain an instance of $\VIAP(k)$ defined by $G$, $\omega_1$, $\omega_2$, and $w$.

One can see that,
if
$(X_1, X_2)$ is feasible for $({\rm V}_{\geq k})$,
i.e., $|X_1 \cap X_2| \geq k$,
then there is a matching $F$ of $G$ with $\partial F \subseteq X_1 \cup X_2$ and $|F| = k$,
i.e., there exists a feasible solution $(X_1, X_2, F)$ for $\VIAP(k)$.
On the other hand,
if $(X_1, X_2, F)$ is a feasible solution for $\VIAP(k)$,
then $(X_1, X_2)$ is feasible for~$({\rm V}_{\geq k})$.
Moreover the objective value of a feasible solution $(X_1, X_2)$ for~$({\rm V}_{\geq k})$
is equal to
that of any corresponding feasible solution $(X_1, X_2, F)$
for $\VIAP(k)$ since $w(e)$ is identically zero.

Thus, $({\rm V}_{\geq k})$ 
reduces to $\VIAP(k)$, 
and hence
can be solved in strongly polynomial time 
in the following way based on the augmenting path algorithm for $\VIAP(k)$~\cite{SIDMA/M96_1,SIDMA/M96_2};
see also~\cite[Theorem~5.2.62]{book/Murota00}.
Here let $X_1$ and $X_2$ be the minimizers of $\omega_1$ and $\omega_2$,
respectively, 
which can be found in a greedy manner. 
\begin{description}
	\item[Step 1:]
	If $|X_1 \cap X_2| \geq k$,
	then output $(X_1,X_2)$ and stop.
	Otherwise, let $X_1^j := X_1$ and $X_2^j := X_2$,
	where $j := |X_1 \cap X_2| < k$.
	\item[Step 2:]
	Execute the augmenting path algorithm for $\VIAP(k)$.
	Then we obtain
	a sequence $\left( (X_1^j, X_2^j), (X_1^{j + 1}, X_2^{j + 1}), \dots, (X_1^\ell, X_2^\ell) \right)$ of solutions,
	where $\left| X_1^{j'} \cap X_2^{j'} \right| = j'$ for $j' = j, j+1, \dots, \ell$.
	If $\ell < k$,
	then output ``$({\rm V}_{\geq k})$ is infeasible.''
	If $\ell \geq k$,
	then output $(X_1^k, X_2^k)$.
\end{description}
We describe the full behavior of the above algorithm in Appendix~\ref{appendix:algorithm description}.

Our algorithm for $({\rm V}_{=k})$ directly follows from 
this 
algorithm for $({\rm V}_{\geq k})$. 
Again let $X_1$ and $X_2$ be the minimizers of $\omega_1$ and $\omega_2$,
respectively. 
\begin{description}
    \item[Case 1 ($|X_1 \cap X_2| \leq k$):]
	Execute the augmenting path algorithm for $({\rm V}_{\geq k})$.
	If the algorithm detects the infeasibility of $({\rm V}_{\geq k})$,
	then output ``$({\rm V}_{=k})$ is infeasible.''
	Otherwise we obtain an optimal solution $(X_1^*, X_2^*)$ with $|X_1^* \cap X_2^*| = k$ for $({\rm V}_{\geq k})$,
	and output it.
	
	\item[Case 2 ($|X_1 \cap X_2| > k$):]
	Let $r_1$ be the rank of $\omega_1$ and $\overline{\omega_2}(X) := \omega_2(V \setminus X)$ for $X \subseteq V$,
	which is the dual valuated matroid of $\omega_2$.
	Note that $X_1$ and $V \setminus X_2$ are minimizers of $\omega_1$ and $\overline{\omega_2}$,
	respectively,
	and $X_1 \cap (V \setminus X_2) < r_1 - k$.
	Then apply Case 1 to $({\rm V}_{= r_1 - k})$ for $\omega_1$ and $\overline{\omega_2}$. 
\end{description}

It is clear that the above methods solve
$({\rm V}_{\geq k})$ and $({\rm V}_{=k})$ in strongly polynomial time.
We can provide the explicit time complexities as follows. 
The proof is deferred to Appendix \ref{appendix:algorithm description}.

\begin{theorem}\label{thm:V=k}
Problems $({\rm V}_{\geq k})$ and $({\rm V}_{=k})$ can be solved in
$O(|V|rk\gamma + |V| k \log |V|)$ time and $O(|V|r^2\gamma + |V| r \log |V|)$ time,
respectively,
where $r$ is the maximum of the ranks of $\omega_1$ and $\omega_2$
and $\gamma$ is the time required for computing the function value.
\end{theorem}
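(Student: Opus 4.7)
The plan is to decompose the running time of each algorithm into three parts: (i) computing the initial minimizers $X_1$ and $X_2$ of $\omega_1$ and $\omega_2$, (ii) counting the number of augmenting iterations, and (iii) bounding the work per augmenting iteration. Summing the three contributions will yield the stated bounds.

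For (i), a minimizer of a valuated matroid of rank at most $r$ can be produced by the greedy algorithm: compute $|V|$ marginal values at cost $O(|V|\gamma)$, sort them in $O(|V|\log|V|)$ time, and perform $r$ greedy selections with exchange checks in $O(|V|r\gamma)$ time. This initialization cost $O(|V|r\gamma + |V|\log|V|)$ is absorbed by both target bounds.

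For (ii) and (iii), I would invoke the augmenting path algorithm for $\VIAP(k)$ of Murota~\cite{SIDMA/M96_1,SIDMA/M96_2}; see also~\cite[Theorem~5.2.62]{book/Murota00}. Each augmentation strictly increases $|X_1\cap X_2|$ by exactly one, so, starting from $j = |X_1\cap X_2|$, the algorithm performs at most $k - j \le k$ augmentations on an instance of $({\rm V}_{\geq k})$. A single augmentation builds an auxiliary directed graph on $O(|V|)$ nodes whose arcs encode elementary exchanges compatible with $\omega_1$ and $\omega_2$; the arc set can be constructed using $O(|V|r)$ valuation queries, and a minimum-weight augmenting path is then computed by Dijkstra with maintained node potentials in $O(|V|\log|V| + |V|r)$ time using a Fibonacci-heap implementation. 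The per-iteration cost is therefore $O(|V|r\gamma + |V|\log|V|)$, and multiplying by the $O(k)$ iteration count yields the bound $O(|V|rk\gamma + |V|k\log|V|)$ for $({\rm V}_{\geq k})$.

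For $({\rm V}_{=k})$, Case~1 ($|X_1\cap X_2| \le k$) executes exactly the $({\rm V}_{\geq k})$ algorithm terminated once $|X_1\cap X_2| = k$, with cost bounded by $O(|V|rk\gamma + |V|k\log|V|) \subseteq O(|V|r^2\gamma + |V|r\log|V|)$ since $k \le r$. Case~2 ($|X_1 \cap X_2| > k$) first replaces $\omega_2$ by its dual $\overline{\omega_2}$ (a constant-time reinterpretation) and then invokes Case~1 on $({\rm V}_{=r_1-k})$; because the target $r_1 - k$ may be as large as $r$, the iteration count is now bounded only by $r$, giving the same final complexity $O(|V|r^2\gamma + |V|r\log|V|)$.

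The main obstacle will be justifying the precise $O(|V|r\gamma + |V|\log|V|)$ bound per augmenting iteration, which rests on a careful reading of the exchange-graph construction and the potential-update step in Murota's $\VIAP$ algorithm. In particular, one must verify that only $O(|V|r)$ relevant exchanges need be tested per iteration (rather than $O(|V|^2)$) and that the running node potentials render the shortest-path subproblem solvable by Dijkstra with nonnegative reduced arc weights, rather than requiring Bellman--Ford.
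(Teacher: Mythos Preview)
Your proposal is correct and follows essentially the same route as the paper: the paper's proof (in Appendix~A) likewise bounds the initialization by $O(|V|r\gamma)$, shows the auxiliary digraph has $O(|V|r)$ arcs so that each augmenting iteration costs $O(|V|r\gamma + |V|\log|V|)$ via Dijkstra with Fibonacci heaps (nonnegative reduced lengths coming from the maintained potentials $p_1,p_2$), multiplies by $k$ iterations for $({\rm V}_{\ge k})$, and handles $({\rm V}_{=k})$ by the same Case~1/Case~2 split with iteration count bounded by $r$. Your closing remarks about the two technical points needing care---the $O(|V|r)$ arc bound and the nonnegativity of reduced arc lengths---are exactly the points the paper verifies explicitly.
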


\begin{remark}\label{rmk:= NP-hard}
If we are given at least three valuated matroids,
then $({\rm V}_{=k})$ and $({\rm V}_{\geq k})$ (even $({\rm W}_{=k})$ and $({\rm W}_{\geq k})$) will be NP-hard,
since they can formulate the matroid intersection problem for three matroids.
Indeed,
let $M_1 = (V, \mathcal{B}_1), M_2 = (V, \mathcal{B}_2), M_3 = (V, \mathcal{B}_3)$ be matroids with the same rank $r$.
Then 
it is clear that 
there exist $X_1, X_2, X_3 \subseteq V$
with $X_1 \in \mathcal{B}_1, X_2 \in \mathcal{B}_2, X_3 \in \mathcal{B}_3$ and
$|X_1 \cap X_2 \cap X_3| \geq r$ (or $|X_1 \cap X_2 \cap X_3| = r$)
if and only if there exists $X \subseteq V$ with $X \in \mathcal{B}_1 \cap \mathcal{B}_2 \cap \mathcal{B}_3$.
\qqed
\end{remark}

We close this subsection by 
exhibiting two more solutions to $({\rm V}_{\geq k})$ and $({\rm V}_{= k})$. 
The first one is
a different reduction of $({\rm V}_{\geq k})$ to 
valuated matroid intersection, 
which requires the argument described in Section~\ref{sec:VMI} below.
Indeed, $({\rm V}_{\geq k})$ for $\omega_1$ and $\omega_2$
is equivalent to $({\rm V}_{\leq r_1 - k})$ for $\omega_1$ and $\overline{\omega_2}$,
where $r_1$ is the rank of $\omega_1$ and $\overline{\omega_2}$ is the dual 
valuated matroid
of $\omega_2$.
One can see that $({\rm V}_{\leq r_1 - k})$ is a special case of $({\rm V}_{\I}^n)$ in which $n = 2$ and $\I$ is the independent set family of the uniform matroid with rank $r_1 - k$.
Thus,
by the argument in Section~\ref{sec:VMI},
$({\rm V}_{\leq r_1 - k})$ can be viewed as a special instance of 
valuated matroid intersection.

The second one is a solution to $({\rm V}_{= k})$.
Recently,
L\'aszl\'o V\'egh~\cite{PrivateV} has presented a simple  $O(|V|r^2 \gamma + |V| r \log |V|)$-time algorithm for $({\rm W}_{= k})$. 
By extending this algorithm,  we 
can
obtain another solution to $({\rm V}_{= k})$, 
which is described in Appendix \ref{appendix:Laszlo}.

\subsection{Relation to the algorithm by Lendl et al.}
\label{SEC:LPT}

In this subsection,
we illustrate the similarity and difference between 
the algorithm of Lendl et al.~\cite{arxiv/LPT19} for $({\rm W}_{=k})$ and 
our algorithm applied to 
$({\rm W}_{=k})$. 
For a detailed description of the algorithm of Lendl et al.~\cite{arxiv/LPT19},  the readers are referred to Appendix \ref{appendix:LPT}. 

As is shown in Appendices \ref{appendix:algorithm description} and \ref{appendix:LPT}, 
both algorithms maintain potential functions on $V_1$ and $V_2$. 
Main differences appear 
in the optimality criteria, 
and in 
the
updating procedures of a solution 
and
potentials. 

The algorithm of Lendl et al.~\cite{arxiv/LPT19} is based on the following sufficient condition of the optimality:
A feasible solution $(X_1, X_2)$ of~$({\rm W}_{=k})$ is optimal
if
there exist a nonnegative potential function $q_1 : V_1 \rightarrow \R_+$, a nonpositive potential function $q_2 : V_2 \rightarrow \R_-$, and a nonnegative value $\lambda \in \R_+$ satisfying that
\begin{itemize}
    \item
    $q_1(v^1) = q_2(v^2) + \lambda$ for each $v \in V$,
    \item
    $X_1$ and $X_2$ are minimizers of $w_1 - q_1$ and $w_2 + q_2$, respectively, and
    \item
    $q_1(v^1) = 0$ for $v^1 \in X_1 \setminus X_2$ and $q_2(v^2) = 0$ for $v^2 \in X_2 \setminus X_1$.
\end{itemize}
We refer to such $(q_1, q_2)$ as an {\it LPT optimality witness} of $(X_1, X_2)$.

Our algorithm is based on the
following is the optimality criteria for $\VIAP(k)$ in \cite[Theorem~5.1]{SIDMA/M96_1}  specialized to $({\rm V}_{\geq k})$.
For
a set function $\omega$ on $2^V$ and a weight function $w$ on $V$,
we define the functions 
$\omega + p$ and $\omega - p$ on $2^V$ by 
\begin{align*}
     &(\omega + p)(X) := \omega(X) + p(X), \\
     &(\omega - p)(X) := \omega(X) - p(X) 
\end{align*}
for each $X \subseteq V$. 
\begin{lemma}
\label{lem:witness}
A feasible solution $(X_1, X_2)$ for~$({\rm V}_{\geq k})$ is optimal if and only if
there are potential functions $p_1 : V_1 \rightarrow \R$ and $p_2 : V_2 \rightarrow \R$
satisfying 
the following three conditions:
\begin{itemize}
    \item
    $p_1(v^1) = p_2(v^2)$ for each $v \in V$;
    \item
    $X_1$ and $X_2$ are minimizers of $\omega_1 - p_1$ and $\omega_2 + p_2$, respectively; 
    and
    \item
    there is $F \subseteq X_1 \cap X_2$
    such that $|F| = k$,
    $X_1 \setminus F \subseteq \argmin p_1$,
    and
    $X_2 \setminus F \subseteq \argmax p_2 $.
\end{itemize}
\end{lemma}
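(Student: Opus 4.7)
The plan is to derive the lemma from the general optimality criterion for $\VIAP(k)$ (Theorem~5.1 of~\cite{SIDMA/M96_1}), applied to the reduction of $(\mathrm{V}_{\geq k})$ to $\VIAP(k)$ described at the start of this section. Recall that in this reduction, the bipartite graph $G = (V_1, V_2; E)$ consists of the disjoint edges $\{v^1, v^2\}$ for $v \in V$, and $w \equiv 0$ on $E$. Under the natural identification $F^* = \{\{v^1, v^2\} : v \in F\}$, a feasible $\VIAP(k)$ triple $(X_1, X_2, F^*)$ corresponds to a feasible $(\mathrm{V}_{\geq k})$ pair $(X_1, X_2)$ together with a subset $F \subseteq X_1 \cap X_2$ of size $k$; the two objective values coincide because $w \equiv 0$. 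Consequently, $(X_1, X_2)$ is optimal for $(\mathrm{V}_{\geq k})$ if and only if there is such an $F$ making $(X_1, X_2, F^*)$ optimal for $\VIAP(k)$.

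Next, I would invoke Murota's criterion: $(X_1, X_2, F^*)$ is optimal for $\VIAP(k)$ iff there exist potentials $p_1 : V_1 \to \R$ and $p_2 : V_2 \to \R$ such that (a) the reduced weight $w(e) + p_1(v^1) - p_2(v^2)$ is nonnegative on every edge $e = \{v^1, v^2\}$, with equality on $F^*$; (b) $X_1$ and $X_2$ minimize $\omega_1 - p_1$ and $\omega_2 + p_2$, respectively; and (c) $p_1(v^1) = \min p_1$ for each $v^1 \in X_1 \setminus \partial F^*$, and $p_2(v^2) = \max p_2$ for each $v^2 \in X_2 \setminus \partial F^*$. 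Substituting $w \equiv 0$, condition~(a) becomes $p_1(v^1) \le p_2(v^2)$ with equality on $v \in F$; condition~(b) is exactly the second bullet of the lemma; and condition~(c) is exactly the third bullet, after the identification $v \leftrightarrow v^1 \leftrightarrow v^2$.

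The remaining task, which I expect to be the main obstacle, is to arrange equality $p_1(v^1) = p_2(v^2)$ for every $v \in V$, not only for $v \in F$, as asserted in the first bullet of the lemma. My plan is to show that any potentials satisfying Murota's conditions can be adjusted to close the slack on unmatched edges. Concretely, for each $v \notin F$ with strict inequality $p_1(v^1) < p_2(v^2)$, I would locally lower $p_2(v^2)$ (or raise $p_1(v^1)$) until the two values agree; since every vertex of $G$ has degree one, this local change interacts with no other edge. I would then verify that conditions (b) and (c) remain intact: if lowering $p_2(v^2)$ would alter $\max p_2$, I compensate by a global constant shift of $p_2$, which does not affect the minimizer condition~(b) since all elements of $\dom \omega_2$ have the same cardinality, and likewise for $p_1$. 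The sufficiency direction is the easy one: the three bullets of the lemma directly imply Murota's conditions (a)--(c), so $(X_1, X_2, F^*)$ is optimal for $\VIAP(k)$ and hence $(X_1, X_2)$ for $(\mathrm{V}_{\geq k})$.
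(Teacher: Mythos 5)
Your overall route is the same as the paper's: the paper offers no proof of this lemma beyond asserting that it is Murota's optimality criterion for $\VIAP(k)$ specialized to the reduction with the perfect-matching bipartite graph and $w\equiv 0$, which is exactly what you do. Your sufficiency direction is fine (it can even be done by a direct two-line computation: for any feasible $(Y_1,Y_2)$ and any $F'\subseteq Y_1\cap Y_2$ with $|F'|=k$, compare $p(Y_1\setminus F')-p(Y_2\setminus F')$ with $p(X_1\setminus F)-p(X_2\setminus F)$ using the third bullet). You have also correctly put your finger on the one genuinely nontrivial point in the necessity direction: Murota's complementary slackness gives equality of the reduced weight only on the matched edges, whereas the first bullet demands $p_1(v^1)=p_2(v^2)$ for \emph{every} $v\in V$.

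However, your proposed repair of that point has a real gap. The remark that a local change at $v^1$ or $v^2$ ``interacts with no other edge'' only addresses the edge constraints; it ignores condition (b), which couples all coordinates of $p_1$ (resp.\ $p_2$) through the valuated matroid. Concretely: if $v\notin X_1$, raising $p_1(v^1)$ by $\delta$ strictly decreases $(\omega_1-p_1)(Y)$ for every $Y\in\dom\omega_1$ with $v\in Y$ while leaving $(\omega_1-p_1)(X_1)$ unchanged, so $X_1$ can cease to be a minimizer; symmetrically, if $v\notin X_2$, lowering $p_2(v^2)$ can destroy the minimality of $X_2$, and for $v\notin X_1\cup X_2$ neither direction is safe and nothing guarantees enough slack in (b) to absorb the edge slack. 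Moreover, for $v\in(X_1\cap X_2)\setminus F$ the third bullet pins $p_1(v^1)=\min p_1$ and $p_2(v^2)=\max p_2$, so no local move is admissible, and the global constant shift you invoke to compensate would disturb the equalities already arranged on $F$ and on previously processed vertices. This case in fact forces $p$ to be constant, and the right way to handle it is to treat $|X_1\cap X_2|>k$ separately (there an optimal $(X_1,X_2)$ must consist of unconstrained minimizers of $\omega_1$ and $\omega_2$, by a single local exchange, so $p\equiv 0$ certifies optimality). For the remaining case $|X_1\cap X_2|=k$ you still need an argument that equality can be achieved on unmatched edges --- e.g.\ by taking the potentials produced by the shortest-path/augmenting-path construction of Appendix~A, where the zero-length arcs $(v^1,v^2)$ enforce the equality by construction, rather than by post-hoc perturbation of arbitrary Murota potentials.
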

For an optimal solution $(X_1, X_2)$ for~$({\rm V}_{\geq k})$, 
we refer to a triple $(p_1, p_2, F)$ satisfying the three conditions in Lemma~\ref{lem:witness} as an {\it optimality witness} of $(X_1, X_2)$.

We first see that 
the potential functions in the algorithm of Lendl et al.\ 
 and 
those in our algorithm
are essentially equivalent.  
During our algorithm for $({\rm V}_{\geq k})$,
one can see that
the potential functions $p_1$ and $p_2$
particularly satisfy the following:
\begin{itemize}
    \item
    $p_1(v^1) = p_2(v^2)$ for each $v \in V$.
    \item
    $X_1$ and $X_2$ are minimizers of $\omega_1 - p_1$ and $\omega_2 + p_2$, respectively.
    \item
    $\min p_1 = 0$,
    $X_1 \setminus X_2 \subseteq \argmin p_1$,
    and
    $X_2 \setminus X_1 \subseteq \argmax p_2$.
\end{itemize}
Hence, if $(q_1, q_2)$ is an LPT optimality witness,
then $(p_1, p_2)$ with $p_1 = p_2 = q_1$ forms an optimality witness in our sense.
Conversely, if $(p_1, p_2)$ is an optimality witness appearing in our algorithm,
then $(q_1, q_2)$ with $q_1 = p_1$ and $q_2 = p_2 - \lambda$ forms an LPT optimality witness for $\lambda := \max p_2$.

We next sketch the difference 
in the
updating procedures, 
whose details are discussed in 
Appendix \ref{appendix:LPT}. 
The update phases of a solution
and of 
potential functions in the algorithm of Lendl et al.\ 
are completely separated, 
while our algorithm 
simultaneously
updates a solution and 
potential functions. 
This
difference 
in the updating procedures
leads to the difference 
in
the running-times of the algorithms;
the algorithm of Lendl et al.\ 
runs in $O(|V|^2 r^2)$ time, 
and 
our algorithm in 
$O(|V|r^2 + |V| r \log |V|)$ time by 
Theorem~\ref{thm:V=k} with $\gamma = O(1)$,
which is better than that of Lendl et al.

\section{Reducing  $({\rm V}_{\I}^n)$ and~$({\rm V}^n(w))$ to valuated matroid intersection}\label{sec:VMI}
In this section,
we 
present reductions of $({\rm V}_{\I}^n)$ and $({\rm V}^n(w))$ for $w\ge 0$
to valuated matroid intersection, 
which implies 
strongly
polynomial-time algorithms for 
those problems.
We begin with the following result for valuated matroid intersection.
\begin{lemma}[\cite{SIDMA/M96_2}; see also Theorem~\ref{thm:V=k}]\label{lem:VMI}
Let $\omega$ and $\omega'$ be valuated matroids on $2^V$ with rank $r$
and $\gamma$ the time required for computing the function value.
Valuated matroid intersection for 
$\omega$ and $\omega'$
can be solved in $O(|V|r^2 \gamma + |V|r \log |V|)$ time.
\end{lemma}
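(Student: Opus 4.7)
The plan is to obtain Lemma~\ref{lem:VMI} as an immediate corollary of Theorem~\ref{thm:V=k}, by realizing valuated matroid intersection as the ``diagonal'' case of $({\rm V}_{=k})$. The key structural observation is that, because each valuated matroid $\omega_i$ is defined on $2^V$ and has rank $r$, every member of $\dom \omega_i$ is a subset of $V$ of cardinality exactly $r$. Therefore, for any pair $(X_1,X_2)$ with $X_i \in \dom \omega_i$, we have $|X_1 \cap X_2| \le r$, with equality if and only if $X_1 = X_2$.

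Given this, the first step is to instantiate $({\rm V}_{=k})$ with the input pair $\omega_1,\omega_2$ and with parameter $k = r$. By the observation above, the feasibility set of this instance is precisely $\{(X,X) : X \in \dom \omega_1 \cap \dom \omega_2\}$, and the objective $\omega_1(X_1) + \omega_2(X_2)$ specializes to $\omega_1(X) + \omega_2(X)$. Hence this instance of $({\rm V}_{=k})$ is identical to the valuated matroid intersection problem for $\omega_1$ and $\omega_2$, and any algorithm for the former yields an algorithm for the latter with no overhead.

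The second step is to apply Theorem~\ref{thm:V=k} to this instance. The parameter ``$r$'' in that theorem is the maximum of the ranks of the two input valuated matroids, which in our setting is exactly the common rank $r$; the oracle time $\gamma$ is the same as in our input. The stated bound for $({\rm V}_{=k})$ is therefore $O(|V|r^2\gamma + |V|r\log|V|)$, which is precisely the complexity asserted by Lemma~\ref{lem:VMI}.

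There is essentially no ``hard part'' here beyond verifying that the reduction is tight: one must check that setting $k = r$ collapses the two base variables to a single common base, and that no slack is lost in the parameters transferred to Theorem~\ref{thm:V=k}. Both verifications are immediate from the definitions of the rank of a valuated matroid and of $({\rm V}_{=k})$, so the proof is a short reduction rather than a separate algorithm design.
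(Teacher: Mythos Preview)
Your proposal is correct and is precisely the derivation the paper signals with its ``see also Theorem~\ref{thm:V=k}'' cross-reference: the lemma is stated as a citation of Murota's result, with the implicit remark that the same bound drops out of Theorem~\ref{thm:V=k} by taking $k=r$ so that the constraint $|X_1\cap X_2|=r$ forces $X_1=X_2$. There is no circularity, since the proof of Theorem~\ref{thm:V=k} in Appendix~\ref{appendix:algorithm description} is a direct analysis of the augmenting-path algorithm and does not invoke Lemma~\ref{lem:VMI}.
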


For the reductions,
we need to prepare a pair of valuated matroids for each problem.
One valuated matroid is common in the reductions of $({\rm V}_{\I}^n)$ and $({\rm V}^n(w))$,
which is defined as follows.
Let $V_1,V_2,\ldots,V_n$ be $n$ disjoint copies of $V$, 
and 
let $\tilde{V} = \bigcup_{i\in [n]}V_i$. 
By abuse of notation, 
for $n$ subsets $X_1,X_2,\ldots,X_n \subseteq V$, 
we denote by 
$(X_1,X_2,\ldots, X_n)$
a subset of $\tilde{V}$ composed of the copies of 
$X_i$ included in $V_i$ ($i\in [n]$).
Let us define a valuated matroid $\tilde{\omega}$ by the disjoint sum of $\omega_1, \omega_2, \dots, \omega_n$.
That is, $\tilde{\omega}$ is a function on $2^{\tilde{V}}$ defined by
\begin{align*}
\tilde{\omega}(X_1, X_2, \dots, X_n) := \omega_1(X_1) + \omega_2(X_2) + \cdots + \omega_n(X_n)
\end{align*}
for each $(X_1, X_2, \dots, X_n) \subseteq \tilde{V}$.
It follows that $\tilde{\omega}$
is a valuated matroid with rank $r := \sum_{i = 1}^n r_i$,
where $r_i$ is the rank of $\omega_i$.

We then provide the other valuated matroid used in the reduction of $({\rm V}_{\I}^n)$.
Define a set system $\tilde{M} = (\tilde{V}, \tilde{\B})$ by
\begin{align*}
    \tilde{\B} =\left\{(X_1, X_2, \dots, X_n)\ \middle|\  \displaystyle\text{$X_i \subseteq V$ ($i \in [n]$), $\bigcap_{i=1}^n X_i \in \I$, $\sum_{i = 1}^n |X_i| = r$} \right\}.
\end{align*}
It is clear that
$({\rm V}_{\I}^n)$ amounts to 
minimizing the sum of $\tilde{\omega}$ and $\delta_{\tilde{\B}}$,
where $\delta_{\tilde{\B}}$ denotes the indicator function of $\tilde{\B}$,
namely,
$$\delta_{\tilde{\B}}(X_1, X_2, \dots, X_n) = 
\begin{cases}
0 & \mbox{if $(X_1, X_2, \dots, X_n) \in \tilde{\B}$}, \\
+\infty& \mbox{otherwise}.
\end{cases}$$
Thus, 
what remains to be proved is 
that $\delta_{\tilde{\mathcal{B}}}$ is a valuated matroid, 
and
is derived from the following lemma.
\begin{lemma}\label{lem:matroid}
The set system $\tilde{M}=(\tilde{V}, \tilde{\B})$ is a matroid with 
base family $\tilde{\B}$.
\end{lemma}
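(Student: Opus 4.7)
The plan is to exhibit $\tilde{M}$ as the truncation at rank $r$ of an auxiliary matroid $M'$ on $\tilde{V}$ defined as follows. Write $\phi\colon \tilde{V} \to V$ for the projection $v^i \mapsto v$, and for $\tilde{X} \subseteq \tilde{V}$ let $g(\tilde{X}) := \{v \in V : \phi^{-1}(v) \subseteq \tilde{X}\}$ denote the set of elements of $V$ all of whose copies lie in $\tilde{X}$. Declare $\tilde{X}$ to be independent in $M'$ iff $g(\tilde{X}) \in \I$. Once $M'$ is shown to be a matroid, the identification $g(\tilde{X}) = \bigcap_{i \in [n]} X_i$ (for $\tilde{X}$ corresponding to the tuple $(X_1,\ldots,X_n)$) makes $\tilde{\B}$ exactly the family of independent sets of $M'$ of size $r$. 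Since the truncation of any matroid at a feasible rank is again a matroid, this gives the lemma (the case $\tilde{\B}=\emptyset$ being degenerate and excluded in context).

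The main work is to prove that $M'$ is a matroid, which I plan to do via the circuit axioms. First I would identify the family of minimal dependent sets of $M'$ as $\mathcal{F} := \{\phi^{-1}(C) : C \text{ is a circuit of } M\}$: any $\tilde{X}$ is dependent iff $g(\tilde{X})$ contains some circuit $C$ of $M$, iff $\phi^{-1}(C) \subseteq \tilde{X}$; and each $\phi^{-1}(C)$ is itself minimal dependent because deleting a single $v^i$ removes $v$ from $g$, so $g(\phi^{-1}(C)\setminus\{v^i\}) \subseteq C \setminus \{v\} \in \I$. Incomparability of the members of $\mathcal{F}$ follows at once from incomparability of circuits of $M$.

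The key step is circuit elimination for $\mathcal{F}$: given $\phi^{-1}(C_1), \phi^{-1}(C_2) \in \mathcal{F}$ and a common element $x = v^i$, the element $v$ lies in $C_1 \cap C_2$, and circuit elimination in $M$ yields a circuit $C_3 \subseteq (C_1 \cup C_2) \setminus \{v\}$, whence
$\phi^{-1}(C_3) \subseteq \phi^{-1}(C_1 \cup C_2) \setminus \phi^{-1}(v) \subseteq (\phi^{-1}(C_1) \cup \phi^{-1}(C_2)) \setminus \{x\}$,
as required. I expect this transfer of the circuit elimination axiom from $M$ to $M'$ to be the only genuinely matroid-theoretic step of the proof; the remaining ingredients (downward closure of the independent set family, and the bijective identification of $\tilde{\B}$ with the size-$r$ bases of the truncation via $g$) are straightforward bookkeeping.
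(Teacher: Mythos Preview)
Your proof is correct and takes a genuinely different route from the paper's. Both arguments begin by relaxing the cardinality condition: your $M'$ coincides with the paper's $\tilde{\I}$, and both then recover $\tilde{\B}$ as the size-$r$ independent sets (i.e., the bases of the truncation). The difference lies in how the matroid property of this relaxed family is verified.

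The paper checks the independence axioms directly. The augmentation step requires a two-case argument: either some $v^* \in Y_{i^*}\setminus X_{i^*}$ can be added without enlarging $\bigcap_i X_i$, or a counting inequality forces $\bigcap_i Y_i$ to strictly ``overhang'' $\bigcap_i X_i$, at which point augmentation in $M$ supplies the needed element. Your approach instead identifies the minimal dependent sets as $\phi^{-1}(C)$ for circuits $C$ of $M$ and transfers circuit elimination from $M$ to $M'$ in one line. This is cleaner and more structural: it implicitly exhibits $M'$ as the matroid obtained from $M$ by replacing each element by $n$ elements in series (dually, the parallel extension of $M^*$ along $\phi$), so the matroid property is inherited rather than re-proved. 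The paper's argument, on the other hand, stays entirely within the independence-axiom framework and is perhaps more self-contained for a reader unfamiliar with the circuit cryptomorphism. Your caveat about the degenerate case $\tilde{\B}=\emptyset$ is the same one the paper leaves implicit.
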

\begin{proof}
Let $\tilde{\I} \subseteq \tilde{V}$ be a subset family obtained from $\tilde{\B}$ by removing the cardinality condition,
that is,
\begin{align*}
    \tilde{\I} =\left\{(X_1, X_2, \dots, X_n)\ \middle|\  \displaystyle\text{$X_i \subseteq V$ ($i \in [n]$), $\bigcap_{i=1}^n X_i \in \I$} \right\}.
\end{align*}
It suffices to show that $\tilde{\I}$ is an independent set family 
of a matroid,
i.e.,
$\tilde{\I}$ satisfies the following axioms:
\begin{itemize}
    \item
    $(\emptyset, \emptyset, \dots, \emptyset) \in \tilde{\I}$.
    \item
    $(X_1, X_2, \dots, X_n) \subseteq (Y_1, Y_2, \dots, Y_n) \in \tilde{\I}$ implies $(X_1, X_2, \dots, X_n) \in \tilde{\I}$.
    \item
    For every $(X_1, X_2, \dots, X_n), (Y_1, Y_2, \dots, Y_n) \in \tilde{\I}$
    with $\sum_{i=1}^n |X_i| < \sum_{i=1}^n |Y_i|$,
    there exist $i^* \in [n]$ and $v^* \in Y_{i^*} \setminus X_{i^*}$ such that
    $(X_1, \dots, X_{i^*} \cup \{v^*\}, \dots, X_n) \in \tilde{\I}$.
\end{itemize}
The first and second are clear.
We prove the third.
If there exist $i^* \in [n]$ and $v^* \in Y_{i^*} \setminus X_{i^*}$ such that
\begin{align*}
    \left( \bigcap_{i \in [n]\setminus \{i^*\}} X_i\right) \cap (X_{i^*} \cup \{ v^* \}) = \bigcap_{i=1}^n X_i,
\end{align*}
then $(X_1, \dots, X_{i^*} \cup \{v^*\}, \dots, X_n) \in \tilde{\I}$ 
follows from $\bigcap_{i=1}^n X_i \in \I$.

Suppose that such $i^*$ and $v^*$ do not exist.
For 
each
$v \in V$, 
denote by $X^{(v)}$ (resp. $Y^{(v)}$) the set of indices $i \in [n]$ with $v \in X_i$ (resp. $v \in Y_i$).
Then,
for each $v \in V$,
we have
$X^{(v)} \supseteq Y^{(v)}$
or
$X^{(v)} = [n] \setminus \{i\}$ for some $i \in [n]$. 
This implies that $|Y^{(v)}| > |X^{(v)}|$ if and only if $Y^{(v)} = [n]$ and $X^{(v)} = [n] \setminus \{i\}$ for some $i$.
It then follows from $\sum_{i=1}^n |X_i| = \sum_{v \in V}|X^{(v)}|$, $\sum_{i=1}^n |Y_i| = \sum_{v \in V}|Y^{(v)}|$,
and $\sum_{i=1}^n |X_i| < \sum_{i=1}^n |Y_i|$ 
that 
\begin{align*}
\left|\left(\bigcap_{i=1}^n Y_i\right) \setminus \left(\bigcap_{i=1}^n X_i\right) \right|
&= \left|\{ v \in V \mid Y^{(v)} = [n], X^{(v)} \subsetneq [n] \}\right|\\
&= \sum_{v \in V,\ |Y^{(v)}| > |X^{(v)}|} \left(|Y^{(v)}| - |X^{(v)}|\right)\\
&= \sum_{v \in V} \left(|Y^{(v)}| - |X^{(v)}|\right) + \sum_{v \in V,\ |X^{(v)}| > |Y^{(v)}|} \left(|X^{(v)}| - |Y^{(v)}|\right)\\
&\geq \sum_{i = 1}^n \left( |Y_i| - |X_i| \right) + \left|\left(\bigcap_{i=1}^n X_i\right) \setminus \left(\bigcap_{i=1}^n Y_i\right)\right|\\
&> \left|\left(\bigcap_{i=1}^n X_i\right) \setminus \left(\bigcap_{i=1}^n Y_i\right)\right| \\
&\ge 0. 
\end{align*}
Since $\bigcap_{i=1}^n X_i$ and $\bigcap_{i=1}^n Y_i$ belong to $\I$,
there exists 
$v^* \in (\bigcap_{i=1}^n Y_i) \setminus (\bigcap_{i=1}^n X_i)$
such that $(\bigcap_{i=1}^n X_i) \cup \{v^*\} \in \I$.
Let $i^* \in [n]$ be an index such that $v^*\in Y_{i^*} \setminus X_{i^*}$.
We then obtain $(X_1, \dots, X_{i^*} \cup \{v^*\}, \dots, X_n) \in \I$, 
since $(\bigcap_{i \in [n] \setminus \{i^*\}} X_i) \cap (X_{i^*} \cup \{v^*\}) = (\bigcap_{i=1}^n X_i) \cup \{v^*\} \in \I$.
\qed
\end{proof}
It follows from Lemma~\ref{lem:matroid} 
that the function $\delta_{\tilde{\B}}$
is a valuated matroid, 
and we conclude that $({\rm V}_{\I}^n)$
can be reduced to valuated matroid intersection. 
Thus we obtain the following 
theorem from
Lemma~\ref{lem:VMI}:
\begin{theorem}\label{thm:VI}
Problem $({\rm V}_{\I}^n)$ can be solved in $O(|V|nr^2 \gamma + |V|nr \log (|V|n))$ time,
where $\gamma$ is the time required for computing the function value.
\end{theorem}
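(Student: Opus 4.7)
The plan is to derive Theorem~\ref{thm:VI} directly from Lemma~\ref{lem:VMI}, using the reduction already set up in the paragraphs preceding the theorem. The preceding discussion provides a pair of set functions on $\tilde{V}$: the function $\tilde{\omega}$, which is the disjoint sum of the valuated matroids $\omega_1,\omega_2,\ldots,\omega_n$, and the indicator function $\delta_{\tilde{\B}}$ of the set system $(\tilde{V},\tilde{\B})$. Disjoint sums of valuated matroids are valuated matroids (the exchange axiom is verified component-by-component), and Lemma~\ref{lem:matroid} asserts that $(\tilde{V},\tilde{\B})$ is a matroid, so $\delta_{\tilde{\B}}$ is a valuated matroid as well. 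Both functions have common rank $r=\sum_{i=1}^n r_i$, since $\dom\tilde{\omega}$ consists exactly of tuples $(X_1,\ldots,X_n)$ with $|X_i|=r_i$ for every $i$, which already forces $\sum_i|X_i|=r$ on any finite-valued solution.

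With the reduction in hand, the actual proof is one step: invoke Lemma~\ref{lem:VMI} on $\tilde{\omega}$ and $\delta_{\tilde{\B}}$, noting that the ground set has size $|\tilde{V}|=n|V|$ and the common rank is $r$. Plugging $|V|\mapsto n|V|$ and $r\mapsto r$ into the bound $O(|V|r^2\gamma+|V|r\log|V|)$ of Lemma~\ref{lem:VMI} yields precisely $O(|V|nr^2\gamma+|V|nr\log(|V|n))$, which is the claimed complexity.

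The one subtlety I would want to justify carefully is the cost parameter $\gamma$: a naive evaluation of $\tilde{\omega}(X_1,\ldots,X_n)$ from scratch takes $\Theta(n)$ calls to the underlying oracles, which would inflate the bound by an unwanted factor of $n$. To avoid this, I would observe that the augmenting-path algorithm behind Lemma~\ref{lem:VMI} probes $\tilde{\omega}$ only through local differences arising from a single exchange $X\mapsto X\setminus\{u\}\cup\{v\}$, and such an exchange touches at most one constituent $\omega_i$ (for $u,v$ in the same $V_i$) or two (for $u,v$ in different $V_i$'s); likewise, membership in $\tilde{\B}$ is checked by an independence test in $M$ via $\bigcap_i X_i\in\I$. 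Hence each oracle probe needed by the valuated matroid intersection algorithm reduces to $O(1)$ calls to an $\omega_i$ oracle or to the independence oracle of $M$, so taking $\gamma$ to be the maximum cost of these basic oracles gives the stated bound without further loss. This is the only place where care is required; everything else is a direct specialization of Lemma~\ref{lem:VMI}.
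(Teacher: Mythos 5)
Your proposal is correct and follows exactly the paper's route: reduce $({\rm V}_{\I}^n)$ to valuated matroid intersection for $\tilde{\omega}$ and $\delta_{\tilde{\B}}$ (justified by Lemma~\ref{lem:matroid}), then apply Lemma~\ref{lem:VMI} with ground set size $n|V|$ and rank $r$. Your extra remark on interpreting $\gamma$ via local exchange queries is a sensible clarification of a point the paper leaves implicit, but it does not change the argument.
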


\begin{remark}
    If we replace the constraint $\bigcap_{i=1}^n X_i \in \I$ in~$({\rm V}_{\I}^n)$ by $\bigcap_{i=1}^n X_i \in \mathcal{B}$,
where $\mathcal{B}$ is the base family of some matroid,
then the problem will be NP-hard even if $n = 2$,
since it can formulate the matroid intersection problem for three matroids. 
In other words, 
if we replace 
the intersection constraint $|X_1 \cap X_2| =k$ in $({\rm W}_{=k})$ 
with $X_1 \cap X_2 \in \B$, 
then the problem becomes NP-hard.
Also, recall Remark~\ref{rmk:= NP-hard}, 
which implies that the problem is NP-hard if 
$n \geq 3$ and the constraint is $|\bigcap_{i=1}^n X_i| = k$.
\qqed
\end{remark}

We next provide another valuated matroid used in the reduction of $({\rm V}^n(w))$.
A set family $\mathcal{L} \subseteq 2^V$ is said to be {\it laminar}
if $X \subseteq Y$, $X \supseteq Y$, or $X \cap Y = \emptyset$ holds for all $X, Y \in \mathcal{L}$.
A function $f : \Z^V \rightarrow \R \cup \{ +\infty \}$
is said to be {\it laminar convex} \cite[Section 6.3]{book/Murota03}
if
$f$ is representable as
\begin{align*}
f(x) = \sum_{X \in \mathcal{L}} g_X\left(\sum_{v \in X} x(v)\right) 
\quad 
\left(x \in \Z^V \right), 
\end{align*}
where $\mathcal{L} \subseteq 2^V$ is a laminar family on $V$,
and for each $X \in \mathcal{L}$, $g_X : \Z \rightarrow \R \cup \{ +\infty \}$ is a univariate discrete convex function, 
i.e., $g_X(k+1) + g_X(k-1) \geq 2 g_X(k)$ for every $k \in \Z$. 
A laminar convex function 
is a typical example of an M${}^\natural$-convex function 
and plays a key role here.

Define a function $\tilde{w}$ on $2^{\tilde{V}}$
by
\begin{align*}
\tilde{w}(X_1, X_2, \dots, X_n) := w\left(\bigcap_{i = 1}^n X_i\right)
\end{align*}
for each $(X_1, X_2, \dots, X_n) \subseteq \tilde{V}$.
It is clear that $({\rm V}^n(w))$
is equivalent to minimizing the sum of $\tilde{\omega}$ and the restriction of
$\tilde{w}$ to $\{ (X_1, X_2, \dots, X_n)  \mid \sum_{i=1}^n |X_i| = r \}$.
For the function $\tilde{w}$, the following holds.
\begin{lemma}\label{lem:laminar}
The function $\tilde{w}$ on $2^{\tilde{V}}$ is laminar convex if $w \geq 0$.
\end{lemma}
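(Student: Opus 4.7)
The plan is to exhibit an explicit laminar convex representation of $\tilde{w}$. The key observation is that $v \in \bigcap_{i=1}^n X_i$ if and only if all $n$ copies $v^1, v^2, \ldots, v^n$ of $v$ lie in the subset $(X_1, X_2, \ldots, X_n) \subseteq \tilde{V}$. Setting $V^{(v)} := \{v^1, v^2, \ldots, v^n\}$ for each $v \in V$, the collection $\mathcal{L} := \{V^{(v)} \mid v \in V\}$ is a partition of $\tilde{V}$, and in particular a laminar family on $\tilde V$. I will use $\mathcal{L}$ as the underlying laminar family in the representation.

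For each $v \in V$, I define $g_v : \Z \to \R \cup \{+\infty\}$ by $g_v(k) := 0$ for $k \in \{0, 1, \ldots, n-1\}$, $g_v(n) := w(v)$, and $g_v(k) := +\infty$ otherwise. The desired identity $\tilde{w}(z) = \sum_{v \in V} g_v\bigl(\sum_{u \in V^{(v)}} z(u)\bigr)$ on $z \in \{0,1\}^{\tilde{V}}$ then follows directly from the observation above: for $z$ corresponding to $(X_1, X_2, \ldots, X_n)$, the inner sum equals $|\{i \in [n] \mid v \in X_i\}|$, which reaches $n$ precisely when $v \in \bigcap_{i=1}^n X_i$, contributing $w(v)$, and lies in $\{0, 1, \ldots, n-1\}$ otherwise, contributing $0$. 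Summing over $v \in V$ then recovers $w\bigl(\bigcap_{i=1}^n X_i\bigr) = \tilde{w}(X_1, X_2, \ldots, X_n)$.

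It remains to verify that each $g_v$ is a univariate discrete convex function, i.e., $g_v(k+1) + g_v(k-1) \geq 2 g_v(k)$ for every $k \in \Z$. All instances are immediate (both sides equal $0$, or the left side is $+\infty$) except at $k = n-1$ (for $n \geq 2$), where the inequality becomes $g_v(n) + g_v(n-2) = w(v) + 0 \geq 0 = 2 g_v(n-1)$, which holds precisely when $w(v) \geq 0$. Thus the hypothesis $w \geq 0$ is used at exactly this single point, and this is the only substantive step; I do not expect any serious obstacle beyond setting up the bookkeeping cleanly.
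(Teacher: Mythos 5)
Your proof is correct and is essentially identical to the paper's: the same laminar partition $\{\{v^1,\dots,v^n\} \mid v\in V\}$, the same unary functions $g_v$ (value $w(v)$ at $n$, zero on $\{0,\dots,n-1\}$, $+\infty$ elsewhere), and the same identity expressing $\tilde{w}$ as the sum of the $g_v$'s. Your explicit check of discrete convexity at $k=n-1$ just spells out the step the paper summarizes as ``it follows from $w(v)\geq 0$ that $g_v$ is discrete convex.''
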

\begin{proof}
For each $v \in V$,
define a unary function $g_v : \Z \rightarrow \R$
by
\begin{align*}
g_v(x) :=
\begin{cases}
w(v) & \text{if $x = n$},\\
0 & \text{if $0 \leq x < n$},\\
+\infty & \text{otherwise}.
\end{cases}
\end{align*}
It follows from  $w(v) \geq 0$ that 
$g_v$ is discrete convex.
Moreover, 
one can see that
\begin{align}\label{eq:g}
\tilde{w}( X_1, X_2, \dots, X_n ) 
{}&{}= \sum_{v \in V} g_v \left( \left| \{ i \in [n] \mid v \in X_i \} \right| \right) 
\quad (\mbox{$X_i \subseteq V$, $i \in [n]$})
\end{align}
holds. 
Here we can regard
the right-hand side of~\eqref{eq:g} as the sum 
taken for $(\{v\},\{v\},\dots, \{v\}) \subseteq  \tilde{V}$ 
for every $v \in V$. 
Since the family $\{ (\{v\},\{v\},\dots, \{v\}) \mid v \in V \}$ is laminar on $\tilde{V}$,
we conclude that the right-hand side of~\eqref{eq:g} is a laminar convex function,
as required.
\qed
\end{proof}

By Lemmas~\ref{lem:MS} and~\ref{lem:laminar},
the restriction of
$\tilde{w}$ to $\{ (X_1, X_2, \dots, X_n) \mid \sum_{i=1}^n |X_i| = r \}$
is a valuated matroid on $2^{\tilde{V}}$
if $w \geq 0$.
Thus $({\rm V}^n(w))$ can be formulated as valuated matroid intersection problem for $\tilde{\omega}$ and $\tilde{w}$, 
establishing the tractability of $({\rm V}^n(w))$ in case of $w \geq 0$.
That is, 
we obtain the following theorem from Lemma~\ref{lem:VMI}.
\begin{theorem}
Problem $({\rm V}^n(w))$ with $w \geq 0$ can be solved in $O(|V|nr^2 \gamma + |V|nr \log (|V|n))$ time,
where $\gamma$ is the time required for computing the function value.
\end{theorem}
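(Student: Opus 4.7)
The plan is to recognize $({\rm V}^n(w))$, for $w \geq 0$, as a valuated matroid intersection problem on the enlarged ground set $\tilde{V} = \bigcup_{i=1}^n V_i$, and then to invoke Lemma~\ref{lem:VMI}. First I would recall that $\tilde{\omega}$, defined earlier as the disjoint sum of $\omega_1,\dots,\omega_n$, is a valuated matroid of rank $r = \sum_{i=1}^n r_i$ on $2^{\tilde{V}}$. Next, by Lemma~\ref{lem:laminar} the function $\tilde{w}$ is laminar convex, and laminar convex functions are $\mathrm{M}^\natural$-convex, so by Lemma~\ref{lem:MS} the restriction of $\tilde{w}$ to the hyperplane $\{x \in \Z^{\tilde{V}} : \sum_{v\in\tilde V} x(v) = r\}$ is $\mathrm{M}$-convex of rank $r$. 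Since this restriction is supported in $\{0,1\}^{\tilde{V}}$, it is in fact a valuated matroid on $2^{\tilde{V}}$.

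With two valuated matroids of common rank $r$ on $\tilde{V}$ in hand, problem $({\rm V}^n(w))$ is exactly the valuated matroid intersection problem for the pair $\bigl(\tilde{\omega},\, \tilde{w}|_{\{|X|=r\}}\bigr)$: minimizing $\tilde{\omega}(X) + \tilde{w}(X)$ over $X \subseteq \tilde{V}$ recovers $\sum_i \omega_i(X_i) + w(\bigcap_i X_i)$ over feasible tuples. Plugging $|\tilde{V}| = n|V|$ and rank $r$ into the bound of Lemma~\ref{lem:VMI}, I obtain $O(|V|n\, r^2\, \gamma' + |V|n\, r\, \log(|V|n))$, where $\gamma'$ denotes the per-evaluation cost for these composite valuated matroids.

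The one step I expect to require genuine care, as opposed to routine substitution, is confirming that $\gamma' = O(\gamma)$ rather than $O(n\gamma)$; otherwise the claimed bound is off by a factor of $n$. A naive evaluation of $\tilde{\omega}(X)$ indeed costs $n\gamma$, but the augmenting-path algorithm underlying Lemma~\ref{lem:VMI} only accesses the valuated matroids through exchange-value differences $\tilde{\omega}(X \setminus \{u\} \cup \{v\}) - \tilde{\omega}(X)$. By the direct-sum structure of $\tilde{\omega}$ such a difference involves at most two of the summands $\omega_i$ and hence costs $O(\gamma)$, and by the explicit decomposition in~\eqref{eq:g} a single-element swap alters at most two of the unary terms $g_v$, so exchange queries on the restricted $\tilde{w}$ cost $O(1)$. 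With $\gamma' = O(\gamma)$, the running time $O(|V|n r^2 \gamma + |V|n r \log(|V|n))$ follows, completing the proof.
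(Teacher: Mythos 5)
Your proposal is correct and follows essentially the same route as the paper: form the disjoint sum $\tilde{\omega}$, use Lemma~\ref{lem:laminar} together with Lemma~\ref{lem:MS} to see that the restriction of $\tilde{w}$ to the rank-$r$ hyperplane is a valuated matroid, and then apply Lemma~\ref{lem:VMI} with ground set of size $n|V|$ and rank $r$. Your additional observation that the algorithm only needs exchange-value differences, so that the per-query cost is $O(\gamma)$ rather than $O(n\gamma)$, is a legitimate point of care that the paper leaves implicit; it strengthens rather than alters the argument.
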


\begin{remark}
As mentioned in Section~\ref{SECintro}, 
even if $\omega_i$ is 
described as~\eqref{eq:size} 
for each $i \in [n]$, 
our reduction 
goes beyond 
the weighted matroid intersection framework. 
This is 
because the valuated matroid $\tilde{w}$ is not modular regardless whether 
$\omega_i$ is of the form~\eqref{eq:size}. 
That is, 
the concept of 
 M-convexity is essential  
 for us to establish 
 the tractability of 
 $({\rm V}^n(w))$, 
even when 
$\omega_i$ is of the form~\eqref{eq:size}. 
\qqed
\end{remark}

\section{Reducing  $({\rm M}_{\geq k}(w))$ to M${}^\natural$-convex submodular flow}\label{sec:MCSF}
In this section,
we 
prove that $({\rm M}_{\geq k}(w))$  
with $w \le 0$
can be solved in polynomial time 
by reducing it to M${}^\natural$-convex submodular flow.

Given an instance of $({\rm M}_{\geq k}(w))$ 
with $w \le 0$, 
construct an instance of the M${}^\natural$-convex submodular flow problem as follows. 
Let $V_1 := \{ v^1 \mid v \in V \}$ and $V_2 := \{ v^2 \mid v \in V \}$ be disjoint two copys of $V$.
We regard $f_1$ and $f_2$ as functions on $\Z^{V_1}$ and on $\Z^{V_2}$,
respectively.
Recall that $\dom f_1$ and $\dom f_2$ form the base polytopes of some polymatroids,
i.e.,
$\dom f_1 \subseteq \Z_+^V$ and $\dom f_2 \subseteq \Z_+^V$.
Let $r_i$ be the rank of $f_i$ for $i = 1,2$.
We define univariate functions $g_1$ and $g_2$ on $\Z$
by
\begin{align*}
g_1(p) :=
\begin{cases}
0 & \text{if $0 \leq p \leq r_2 - k$},\\
+\infty & \text{otherwise},
\end{cases}
\qquad
g_2(q) :=
\begin{cases}
0 & \text{if $0 \leq q \leq r_1 - k$},\\
+\infty & \text{otherwise}.
\end{cases}
\end{align*}
Let $s$ and $t$ be distinct elements not belonging to $V_1 \cup V_2$, 
and define a function $h$ on $\Z^{V_1 \cup \{s\} \cup V_2 \cup \{t\} }$
by the disjoint sum of $f_2, g_2$ with the simultaneous coordinate inversion and $f_1, g_1$,
i.e.,
\begin{align*}
h(x_1,p,x_2,q) := \left(f_1(-x_1) + g_1(-p)\right) + \left(f_2(x_2) + g_2(q)\right) 
\end{align*}
for each $x_1 \in \Z^{V_1}$, $x_2 \in \Z^{V_2}$, and $(p,q) \in \Z^{\{s,t\}}$.
It then follows that $h$ is an M${}^\natural$-convex function.
Indeed,
$g_1$ and $g_2$
are clearly M${}^\natural$-convex,
the simultaneous coordinate inversion 
keeps the M${}^\natural$-convexity (see, e.g., \cite[Theorem 6.13 (2)]{book/Murota03}),
and
the disjoint sum of two M${}^\natural$-convex functions is also M${}^\natural$-convex.
We then construct a directed bipartite graph $G = ( V_1 \cup \{s\}, V_2 \cup \{t\}; A)$
endowed with a weight function $\hat{w}\colon A \to \R$ defined
by 
\begin{align*}
    &{}A := \{ (v^1, v^2) \mid v \in V \} 
    \cup \{(v^1, t) \mid v \in V \}
    \cup \{ (s, v^2) \mid v \in V \}, \\ 
&{}\hat{w}(a) :=
\begin{cases}
w(v) & \text{if $a = (v^1, v^2)$},\\
0 & \text{otherwise}
\end{cases}
\qquad (a \in A).
\end{align*}
We now obtain  the following instance of the M${}^\natural$-convex submodular flow problem:
\begin{align}\label{eq:MCSF}
\begin{array}{lll}
\text{Minimize} &\displaystyle\quad
h(\partial \xi) + \sum_{a \in A}\hat{w}(a) \xi(a)\\
\text{subject to} &\quad \xi(a) \geq 0 \quad &(a \in A).
\end{array}
\end{align}

The following lemma shows that 
$({\rm M}_{\geq k}(w))$ with $w \leq 0$ is reduced to the problem~\eqref{eq:MCSF}, 
and thus 
establishes its tractability.
\begin{lemma}\label{lem:MCSF}
Problem $({\rm M}_{\geq k}(w))$ with $w \leq 0$
is equivalent to the problem~\eqref{eq:MCSF}. 
\end{lemma}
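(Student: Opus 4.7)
The plan is to establish the equivalence by setting up a correspondence between feasible flows $\xi$ in \eqref{eq:MCSF} and feasible pairs $(x_1,x_2)$ in $({\rm M}_{\geq k}(w))$, and then showing that the objectives match in one direction and are bounded in the other. The key identification is $x_1(v) := \xi(v^1,v^2) + \xi(v^1,t)$ and $x_2(v) := \xi(v^1,v^2) + \xi(s,v^2)$. Under this identification the boundary computation yields $\partial\xi(v^1) = -x_1(v)$, $\partial\xi(v^2) = x_2(v)$, $-\partial\xi(s) = r_2 - \sigma$, and $\partial\xi(t) = r_1 - \sigma$, where $\sigma := \sum_{v \in V}\xi(v^1,v^2)$; consequently
\[ h(\partial\xi) = f_1(x_1) + f_2(x_2) + g_1(r_2 - \sigma) + g_2(r_1 - \sigma). \]

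For one direction, given a feasible $(x_1,x_2)$ of $({\rm M}_{\geq k}(w))$, I would set $\xi(v^1,v^2) := \min\{x_1(v),x_2(v)\}$ and define the residuals $\xi(v^1,t)$ and $\xi(s,v^2)$ accordingly; non-negativity is immediate. The constraint $\sum_v \min\{x_1(v),x_2(v)\} \geq k$ combined with the rank conditions $\sum_v x_i(v) = r_i$ forces $k \leq \sigma \leq \min\{r_1,r_2\}$, so both $g$-terms vanish and $h(\partial\xi) = f_1(x_1) + f_2(x_2)$. The flow cost $\sum_{a \in A}\hat w(a)\xi(a) = \sum_v w(v)\min\{x_1(v),x_2(v)\}= w(\min\{x_1,x_2\})$ then reproduces the interaction term, so the two objective values coincide.

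For the converse, given any $\xi \geq 0$ with finite objective in \eqref{eq:MCSF}, I would define $(x_1,x_2)$ via the same identification; finiteness of the $f_i$-parts of $h(\partial\xi)$ places $x_i \in \dom f_i$, while finiteness of $g_1$ (equivalently $g_2$) forces $\sigma \geq k$. Since $\min\{x_1(v),x_2(v)\} = \xi(v^1,v^2) + \min\{\xi(v^1,t),\xi(s,v^2)\} \geq \xi(v^1,v^2)$, the intersection constraint of $({\rm M}_{\geq k}(w))$ holds. Here the hypothesis $w \leq 0$ enters crucially: the pointwise inequality reverses to give $w(v)\min\{x_1(v),x_2(v)\} \leq w(v)\xi(v^1,v^2)$, which sums to $w(\min\{x_1,x_2\}) \leq \sum_a \hat w(a)\xi(a)$. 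Hence the $({\rm M}_{\geq k}(w))$-objective at $(x_1,x_2)$ does not exceed the \eqref{eq:MCSF}-objective at $\xi$, and combining both directions yields equality of the optimal values.

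The main subtlety, and the reason the sign assumption $w \leq 0$ is required, is that the construction is not a bijection on feasibility sets: for a fixed $(x_1,x_2)$, any $\xi(v^1,v^2) \in [0,\min\{x_1(v),x_2(v)\}]$ with $\sigma \geq k$ produces the same $x_i$'s. The condition $w \leq 0$ is precisely what guarantees that an optimal $\xi$ saturates this min pointwise, so that the linear flow cost faithfully encodes the nonlinear cost $w(\min\{x_1,x_2\})$; without it, the \eqref{eq:MCSF}-side could be strictly cheaper by choosing $\xi(v^1,v^2)$ smaller at indices $v$ with $w(v)>0$.
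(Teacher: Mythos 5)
Your proof is correct and follows essentially the same route as the paper: the same flow construction for the forward direction, and the same use of $w\le 0$ to handle flows where both residual arcs at a vertex are positive. The only (cosmetic) difference is that the paper first normalizes such a $\xi$ to a canonical $\xi'$ with $\xi'(v^1,t)=0$ or $\xi'(s,v^2)=0$ at each $v$ without increasing the objective, whereas you read off $(x_1,x_2)$ directly from any $\xi$ and absorb the slack into the inequality $w(v)\min\{x_1(v),x_2(v)\}\le w(v)\xi(v^1,v^2)$ — these are the same observation.
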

\begin{proof}
Given a
feasible solution $(x_1, x_2)$ of $({\rm M}_{\geq k}(w))$, 
construct a feasible solution $\xi$ of~\eqref{eq:MCSF}
by 
\begin{align}\label{eq:xi}
\xi(a) :=
\begin{cases}
\min \{ x_1(v), x_2(v) \} & \text{if $a = (v^1, v^2)$},\\
\max\{ 0, x_1(v) - x_2(v) \} & \text{if $a = (v^1, t)$}, \\
\max\{ 0, x_2(v) - x_1(v) \} & \text{if $a = (s, v^2)$}. 
\end{cases}
\end{align}
Then 
it is not difficult to see that 
$(x_1, x_2)$ in~$({\rm M}_{\geq k}(w))$ 
and $\xi$ in~\eqref{eq:MCSF} have the same 
objective values.

Conversely, take any feasible solution $\xi$ for~\eqref{eq:MCSF}.
If 
$\xi(v^1, t) = 0$ 
or 
$\xi(s, v^2) = 0$ 
holds for every $v \in V$,
then we can straightforwardly construct a feasible solution $(x_1, x_2)$ satisfying~\eqref{eq:xi}.
In this case,
the objective value of $(x_1, x_2)$ in $({\rm M}_{\geq k}(w))$ and that of $\xi$ in \eqref{eq:MCSF} are the same.
Suppose that there is $v \in V$ with $\xi(v^1, t) > 0$ and $\xi(s, v^2) > 0$.
Then define
a new feasible solution $\xi'$ of~\eqref{eq:MCSF} by
\begin{align*}
\xi'(a) :=
\begin{cases}
\xi(a) + \min \{ \xi(v^1, t), \xi(s, v^2) \} & \text{if $a = (v^1, v^2)$},\\
\xi(a) - \min \{ \xi(v^1, t), \xi(s, v^2) \} & \text{if $a = (v^1, t)$ or $a = (s, v^2)$}.
\end{cases}
\end{align*}
Clearly $\partial \xi = \partial \xi'$ holds.
It
follows from 
$w \leq 0$ that 
the objective value for $\xi'$ is at most that for $\xi$. 
It also follows that $\xi'(v^1, t) = 0$ or $\xi'(s, v^2) = 0$ holds for each $v \in V$.
We can thus construct a feasible solution $(x_1, x_2)$ satisfying~\eqref{eq:xi},
attaining the desired objective value.
Therefore we conclude
that $({\rm M}_{\geq k}(w))$ with $w \leq 0$ and the problem~\eqref{eq:MCSF} are equivalent.
\qed
\end{proof}

\section{The NP-hardness of~$({\rm V}^n(w))$ and~$({\rm M}_{\geq k}(w))$}\label{sec:NP}
In Sections~\ref{sec:VMI} and~\ref{sec:MCSF},
we have shown the tractability of $({\rm V}^n(w))$ with $w \ge 0$,
and that of $({\rm M}_{\geq k}(w))$ with $w \le 0$.
This section is devoted to showing that they are NP-hard in general.

\paragraph{The NP-hardness of~$({\rm V}^n(w))$.}
We prove that $({\rm V}^n(w))$ can formulate 
the problem of finding a maximum common independent set for three matroids, 
which is NP-hard.
Given three matroids $M_1, M_2, M_3$ on the same ground set $V$
with the base families $\B_1, \B_2, \B_3$,
respectively, 
construct an instance of $({\rm V}^n(w))$ as follows. 
Let 
$\delta_{\B_i}$ 
be the indicator function of $\B_i$
for $i = 1,2,3$, 
and 
define a weight function $w : V \rightarrow \R$ by $w(v) := -1$ for each $v \in V$. 
Clearly, 
$\delta_{\B_i}$ is a valuated matroid for each $i=1,2,3$. Thus, 
$\delta_{\B_i}$ ($i=1,2,3$) and $w$ define an instance of $({\rm V}^n(w))$ with $n=3$. 
It is straightforward to see that 
this instance of $({\rm V}^n(w))$ 
is equivalent to
the problem of finding a maximum comment independent set of $M_1, M_2, M_3$,
as required.

\paragraph{The NP-hardness of~$({\rm M}_{\geq k}(w))$.}
Lendl et al.\ \cite{arxiv/LPT19} proved that the following problem $({\rm P}_{= 0})$ is NP-hard: 
\begin{align*}
({\rm P}_{= 0}) \qquad
\begin{array}{lll}
\text{Minimize} \quad& w_1(x_1) + w_2(x_2)&\\
\text{subject to} \quad& x_i \in B_i \quad &(i = 1,2),\\
&\displaystyle\sum_{v \in V} \min \{ x_1(v), x_2(v) \} = 0,&
\end{array}
\end{align*}
where $B_1, B_2 \subseteq \Z^V$ are the base polytopes of some polymatroids on the ground set $V$,
and
$w_1$ and $w_2$ are linear functions on $\Z^V$. 
Here we prove that $({\rm P}_{= 0})$ can be reduced to $({\rm M}_{\geq k}(w))$. 

Given an instance of $({\rm P}_{= 0})$, 
construct an instance of $({\rm M}_{\geq k}(w))$ in the following way. 
For $i = 1,2$,
define a function $f_i \colon \Z^V \to \R \cup\{+\infty\}$ by 
\begin{align*}
f_i(x_i) := 
\begin{cases}
w_i(x_i) & \text{if $x_i \in B_i$}, \\
+\infty & \text{otherwise}.
\end{cases}
\end{align*}
It is not difficult to see that $f_1$ and $f_2$ are M-convex functions.
Set $k = 0$ and $w$ sufficiently large, e.g., $w(v) > \max_{x_1 \in B_1} f_1(x_1) + \max_{x_2 \in B_2} f_2(x_2)$ for each $v \in V$.
Denote 
an instance of $({\rm M}_{\geq k}(w))$ defined by these $f_1, f_2, k, w$  by (I).

Then, 
an optimal solution for the given instance of $({\rm P}_{= 0})$ can be obtained from that of (I).
If (I) is infeasible,
then clearly the instance of $({\rm P}_{= 0})$ is infeasible. 
Suppose that (I) has an optimal solution  
$(x_1^*,x_2^*)$ with bounded objective value. 
If 
$\sum_{v \in V} \min\{x_1^*(v), x_2^*(v)\} > 0$, 
it follows from the construction of $w$ that 
there is no $(x_1,x_2)$ 
satisfying
that 
$x_1\in B_1$, $x_2 \in B_2$, 
and $\sum_{v \in V} \min\{x_1^*(v), x_2^*(v)\} = 0$, 
which implies
that the given instance of $({\rm P}_{= 0})$ is infeasible. 
If 
$\sum_{v \in V} \min\{x_1^*(v), x_2^*(v)\} = 0$, 
then 
it is straightforward to see that 
$(x_1^*,x_2^*)$ is also an optimal solution for the instance of $({\rm M}_{\geq k}(w))$.

\section{Applications}\label{sec:app}

In this section, 
we present applications of our generalized problems 
in the recoverable robust matroid basis problem, combinatorial optimization problems with interaction costs, 
and matroid congestion games.

\subsection{Recoverable robust matroid basis problem}\label{subsec:recrob}
In the {\it recoverable robust matroid basis problem}~\cite{book/Busing11}, 
we are given a matroid $(V, \mathcal{B})$ with ground set $V$ and base family $\mathcal{B}\subseteq 2^V$, 
a weight function $w_1$ on $V$,
a family $\mathcal{W}$ of weight functions on $V$,
and a nonnegative integer $k$.
The recoverable robust matroid basis problem
is described as 
the following minimization problem with variable $X_1 \in \mathcal{B}$:
\begin{align}\label{eq:recrobmat}
\begin{array}{ll}
    \text{Minimize}\quad &\displaystyle w_1(X_1) + \max_{w_2 \in \mathcal{W}} \left\{\min_{X_2 \in \mathcal{B}, |X_1 \cap X_2| \geq k} w_2(X_2)\right\} \\
    \text{subject to}\quad & 
    {X_1 \in \mathcal{B}}. 
\end{array}
\end{align}

This problem simulates the following situation.
The family $\mathcal{W}$ represents the uncertainty of cost functions.
The actual cost function $w_2 \in \mathcal{W}$ is revealed after choosing a basis $X_1 \in \mathcal{B}$,
which costs $w_1(X_1)$.
In the recovery phase,
we rechoose a basis $X_2 \in \mathcal{B}$ that is not much different from the first basis $X_1$,
i.e.,\  $|X_1 \cap X_2| \geq k$,
which requires the additional cost $w_2(X_2)$.
The objective is to minimize the worst-case total cost $w_1(X_1) + w_2(X_2)$.

It is known~\cite{KKZ14}
that the recoverable robust matroid basis problem
is NP-hard even when $|\mathcal{W}|$ is constant and $\mathcal{B}$
is a base family of a graphic matroid.
Lendl et al.~\cite{arxiv/LPT19} observed that
the recoverable robust matroid basis problem
can be reduced to $({\rm W}_{\geq k})$
if the uncertainty set $\mathcal{W}$ has the interval uncertainty representation:
\begin{align*}
    \mathcal{W} = \{w \colon V \to \R \mid \underline{w}(v) \leq w(v) \leq \overline{w}(v) \text{ for each $v \in V$} \}.
\end{align*}
Indeed, 
in this case, 
\eqref{eq:recrobmat} can be described in the form of~$({\rm W}_{\geq k})$:
\begin{align*}
\begin{array}{ll}
    \text{Minimize}\quad & w_1(X_1) + \overline{w}(X_2) \\
    \text{subject to} \quad& X_1, X_2 \in \mathcal{B},\\
    &|X_1 \cap X_2| \geq k.
\end{array}
\end{align*}

Our result naturally gives its nonlinear and polymatroidal generalization.
Let $f_1$ and $f$ be M-convex functions on $\Z^V$ with $\dom f_1, \dom f \subseteq \Z_+^V$. 
Define a family $\mathcal{W}$ of M-convex functions by
\begin{align}
\label{eq:W M}
    \mathcal{W}=\{ f + w \mid w\colon \text{linear function on $\Z^V$ with }\underline{w} \leq w \leq \overline{w}\}, 
\end{align}
where $\underline{w}$ and $\overline{w}$ are linear functions on $\Z^{V}$.
Now, consider the following problem:
\begin{align}\label{eq:recrobval M}
\text{Minimize}\quad  f_1(x_1) + \max_{f_2 \in \mathcal{W}} \left\{\min \left\{ f_2(x_2) \ \middle|\ \sum_{v \in V} \min\{ x_1(v), x_2(v) \} \geq k  \right\}\right\}.
\end{align}
It then follows that
\eqref{eq:recrobval M} amounts to
\begin{align*}
\begin{array}{ll}
    \text{Minimize} \quad & f_1(x_1) + (f_2 + \overline{w})(x_2) \\
    \text{subject to}\quad &\displaystyle \sum_{v \in V} \min\{ x_1(v), x_2(v) \} \geq k.
\end{array}
\end{align*}
Since $f_2 + \overline{w}$ is M-convex,
this is a special case of
$({\rm M}_{\geq k}(w))$
with $w \leq 0$, 
and thus 
can be solved in weakly polynomial time.
In particular, if the objective function is defined on $2^V$, or equivalently,
$f_1$ and $f_2$ are valuated matroids,
then \eqref{eq:recrobval M}
is equivalent to $({\rm V}_{\geq k})$,
and can be solved in strongly polynomial time.

\begin{theorem}
The problem \eqref{eq:recrobval M} can be solved in weakly polynomial time when the uncertainty set $\mathcal{W}$ is in the form of \eqref{eq:W M}. 
In addition, if the objective function is defined on $2^V$,
then it can be solved in strongly polynomial time.
\end{theorem}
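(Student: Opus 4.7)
The plan is to verify the two claims already hinted at in the displayed text above the theorem: that the inner maximum over $\mathcal{W}$ collapses to evaluation at $\overline{w}$, and that the resulting problem falls inside the frameworks already established for $({\rm M}_{\geq k}(w))$ and $({\rm V}_{\geq k})$.

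First I would fix a feasible $x_1$ and reduce the inner max-min. For any $f_2 = f + w \in \mathcal{W}$ and any $x_2 \in \dom f$, we have $x_2 \geq 0$ because $\dom f \subseteq \Z_+^V$, hence
\[
f(x_2) + w(x_2) \;=\; f(x_2) + \sum_{v \in V} w(v)\, x_2(v) \;\leq\; f(x_2) + \overline{w}(x_2).
\]
Taking the minimum over feasible $x_2$ on both sides yields $\min_{x_2}(f+w)(x_2) \leq \min_{x_2}(f+\overline{w})(x_2)$ for every admissible $w$, with equality attained at $w = \overline{w}$. Consequently \eqref{eq:recrobval M} is equivalent to minimizing $f_1(x_1) + (f + \overline{w})(x_2)$ subject to $\sum_{v \in V} \min\{x_1(v), x_2(v)\} \geq k$.

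Next I would verify the M-convexity structure. Since $f$ is M-convex and $\overline{w}$ is linear, the sum $f + \overline{w}$ is M-convex with the same effective domain as $f$, which forms the base polytope of a polymatroid. Taking $f_1$ and $f + \overline{w}$ as the two M-convex functions in $({\rm M}_{\geq k}(w))$ and setting the additional linear function there to the identically zero function (which trivially satisfies $w \leq 0$), we recognize the reduced problem as an instance of $({\rm M}_{\geq k}(w))$ in the tractable regime. The weakly polynomial solvability then follows from Lemma~\ref{lem:MCSF} combined with the weakly polynomial algorithm for M${}^\natural$-convex submodular flow invoked in Section~\ref{sec:MCSF}.

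For the second assertion, when $\dom f_1$ and $\dom f$ are contained in $\{0,1\}^V$, both $f_1$ and $f + \overline{w}$ become valuated matroids, and the reduced problem becomes precisely an instance of $({\rm V}_{\geq k})$, so Theorem~\ref{thm:V=k} gives a strongly polynomial-time algorithm. The only genuine obstacle here is the justification of the max-min simplification in the first step; once the monotonicity argument based on $\dom f \subseteq \Z_+^V$ is in place, the remainder is simply a matter of invoking the standard closure property that adding a linear term preserves M-convexity and quoting the earlier theorems.
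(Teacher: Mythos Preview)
Your proposal is correct and follows essentially the same route as the paper: collapse the inner $\max$ over $\mathcal{W}$ to the choice $w=\overline{w}$, observe that $f+\overline{w}$ is M-convex, and then recognize the resulting problem as an instance of $({\rm M}_{\geq k}(w))$ with $w\equiv 0$ (respectively of $({\rm V}_{\geq k})$ in the $\{0,1\}^V$ case). The only difference is that you spell out the monotonicity argument using $\dom f\subseteq\Z_+^V$, which the paper leaves implicit.
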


Let us also mention 
the following variant
of the recoverable robust matroid basis problem discussed in Lendl et al.~\cite{arxiv/LPT19}. 
Here, $w_1$ and $w_2$ are weight functions on $V$,
$\B_1$ and $\B_2$ are the base families of matroids on $V$,
and $c : \Z_+ \to \R \cup \{+\infty\}$ is a univariate function which is not necessarily linear. 
\begin{align*}
({\rm W}_{c}) \qquad
 &\begin{array}{lll}
 \text{Minimize} \quad& w_1(X_1) + w_2(X_2) + c(|X_1 \cap X_2|)\\
 \text{subject to} \quad& X_i \in \mathcal{B}_i \quad &(i = 1,2).
 \end{array} 
 \end{align*}
They showed that $({\rm W}_{c})$ can be solved in strongly polynomial time
by reducing it to $({\rm W}_{=k})$.
Indeed,
for each
$k = 0, 1, \dots, |V|$,
let $(X_1^k, X_2^k)$ be an optimal solution of $({\rm W}_{=k})$.
Then we have that $\min_{k = 0,1,\dots, |V|} \{w_1(X_1^k) + w_2(X_2^k) + c(k)\}$
is an optimal solution of $({\rm W}_{c})$.
We remark that the original form in \cite{arxiv/LPT19} includes
a penalty $C(|X_1 \triangle X_2|)$ in place of $c(|X_1 \cap X_2|)$,
where $C$ is also a univariate function.
It follows that $c$ and $C$ have a one-to-one correspondence defined
by a certain transformation.

Here we generalize
the weight functions $w_1$ and $w_2$ in $({\rm W}_{c})$
to valuated matroids $\omega_1$ and $\omega_2$ on $2^V$:
\begin{align*}
({\rm V}_{c}) \qquad
&\begin{array}{lll}
\text{Minimize} \quad& \omega_1(X_1) + \omega_2(X_2) + c(|X_1 \cap X_2|).
\end{array} 
\end{align*}
By the same argument for $({\rm W}_{c})$,
we can solve $({\rm V}_{c})$ in strongly polynomial time
via $({\rm V}_{=k})$.

Besides $({\rm V}_{=k})$,
the problem $({\rm V}_{c})$ has some similarity to 
other
problems discussed so far. 
It first looks similar to $({\rm V}^n(w))$ with $n=2$, 
but in fact they are different in the sense that 
the additional costs $c(|X_1 \cap X_2|)$ 
and $w(X_1 \cap X_2)$ cannot represent each other: 
they just coincide in the cases where $c$ is a linear function and 
$w(v)$ is identical for every $v \in V$. 

This observation, however, 
leads to the following generalization of $({\rm V}_{c})$.
That is, 
we can generalize 
$({\rm V}_{c})$ 
to have $n$ valuated matroids instead of the two weight functions $w_1$ and $w_2$, 
and can solve it in strongly polynomial time 
by our solution to $({\rm V}^n(w))$ 
if $c$ is linear and nonnegative.
Similarly,
we can generalize
$({\rm V}_{c})$ to have two M-convex functions on $\Z_+^V$, 
and can solve it in weakly polynomial time 
via $({\rm M}_{\geq k}(w))$ if $c$ is linear and nonpositive.

\subsection{Combinatorial optimization problem with interaction costs}\label{subsec:COPIC}

Lendl, \'Custi\'c, and Punnen~\cite{LCP19} introduced 
a framework of \emph{combinatorial optimization with interaction costs} (\emph{COPIC}),  
which is 
described as follows. 
For two sets $V_1$ and $V_2$, 
we are given cost functions $w_1\colon V_1 \to \R$ and 
$w_2 \colon V_2 \to \R$, 
as well as 
\emph{interaction costs} $q\colon V_1 \times V_2 \to \R$. 
The objective is to find a pair of feasible sets $X_1 \subseteq V_1$ and $X_2 \subseteq V_2$ 
minimizing 
$$
\sum_{u\in X_1}w_1(u) + \sum_{v \in X_2}w_2(v) + \sum_{u\in X_1}\sum_{v \in X_2}q(u,v).
$$

We focus on the 
\emph{diagonal COPIC}, 
where $V_1$ and $V_2$ are identical and 
$q(u,v)=0$ if $u \neq v$. 
We further assume that 
the feasible sets are the base families of matroids. 
That is,   
the problem is formulated by 
two matroids $(V, \B_1)$ and $(V,\B_2)$ 
and 
modular cost functions $w_1,w_2,q \colon 2^V \to \R$ 
in the following way:
\begin{align}\label{eq:copic}
&\begin{array}{lll}
\text{Minimize} \quad& w_1(X_1) + w_2(X_2) + q(X_1 \cap X_2)\\
\text{subject to} \quad& X_i \in \mathcal{B}_i \quad &(i = 1,2). 
\end{array} 
\end{align}

The problem~\eqref{eq:copic} appears in the context of the {\it $2$-min-max-min robustness}~\cite{MPA/BK17} defined as follows:
\begin{align}\label{eq:2-MMM}
    \displaystyle
    \min_{X_1, X_2\in \B} \max_{w \in \mathcal{W}} \min \{ w(X_1), w(X_2) \},
\end{align}
where $\B \subseteq 2^V$ is the base family of a matroid and $\mathcal{W}$ is a family of cost functions on $V$.
Chassein and Goerigk~\cite{DAM/CG20} showed that,
if $\mathcal{W}$ 
is
of the form
\begin{align}\label{eq:W}
    \mathcal{W} = \left\{
    w \colon  V \to \R_+
    \ \middle|\ \underline{w}(v) \leq w(v) \leq \overline{w}(v) \ (v \in V),\ \sum_{v \in V} \frac{w(v) - \underline{w}(v)}{ \overline{w}(v) - \underline{w}(v) } \leq C \right\}
\end{align}
for some $C \in \R_+$,
then
the $2$-min-max-min robustness
can be reduced to $O(|V|^3)$ many problems 
in
the form~\eqref{eq:copic}, 
where $\B_1 = \B_2$, $w_1$ and $w_2$ are nonpositive weight functions,
and $q$ is a nonnegative weight function on $V$.

Chassein and Goerigk~\cite{DAM/CG20}
proved that the above special case of the problem~\eqref{eq:copic}
can be solved in polynomial time via the ellipsoid method. 
Other previous work on the problem~\eqref{eq:copic}
includes the following.
If 
$w_1$ and $w_2$ are identically zero 
and 
$q \ge 0$, 
then the problem~\eqref{eq:copic} amounts 
to finding a socially optimal state in a two-player 
matroid congestion game, 
and thus 
can be solved in polynomial time~\cite{ARV08}. 
Lendl et al.~\cite{LCP19} extended the solvability 
to the case where the interaction cost $q$ may be arbitrary.

Now we can discuss another direction of generalization: 
the costs $w_1$ and $w_2$ are valuated matroids. 
This is a special case of $({\rm V}^n(w))$
and $({\rm M}_{\geq k}(w))$, 
and thus can be solved in polynomial time when $q \ge 0$ or $q \leq 0$. 

\begin{theorem}
The problem~\eqref{eq:copic} 
can be solved in strongly polynomial time 
if 
$w_1,w_2 \colon 2^V \to \R$ are valuated matroids, 
and $q \ge 0$ or $q \le 0$. 
\end{theorem}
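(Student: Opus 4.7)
The plan is to reduce the generalized diagonal COPIC to instances of $({\rm V}^n(w))$ with $n = 2$ and $w \geq 0$, and then invoke the strongly polynomial-time algorithm for $({\rm V}^n(w))$ established in Section~\ref{sec:VMI}. I would treat the cases $q \geq 0$ and $q \leq 0$ separately.

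For $q \geq 0$ the reduction is transparent: the problem~\eqref{eq:copic} with valuated-matroid costs $w_1, w_2$ is literally in the shape of $({\rm V}^n(w))$ with $n = 2$, $\omega_i := w_i$, and linear part $w := q$, because the base constraint $X_i \in \mathcal{B}_i$ is encoded by the effective domain of the valuated matroid $w_i$. So the strongly polynomial-time algorithm of Section~\ref{sec:VMI} directly applies.

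For $q \leq 0$, I would flip the sign of $q$ via a dualization trick. Let $\overline{w_2}(Y) := w_2(V \setminus Y)$ denote the dual valuated matroid of $w_2$, and substitute $Y_2 := V \setminus X_2$. Since $X_1 \cap X_2 = X_1 \setminus Y_2$ and $q$ is modular, we have $q(X_1 \cap X_2) = q(X_1) - q(X_1 \cap Y_2)$, so the objective of~\eqref{eq:copic} rewrites as
\begin{align*}
(w_1 + q)(X_1) + \overline{w_2}(Y_2) + (-q)(X_1 \cap Y_2).
\end{align*}
Because valuated matroids are closed under addition of modular functions and under dualization (standard facts in discrete convex analysis; see~\cite{book/Murota03}), both $w_1 + q$ and $\overline{w_2}$ are valuated matroids on the ground set $V$, while $-q \geq 0$. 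Hence we have again an instance of $({\rm V}^n(w))$ with $n=2$ and nonnegative added weight, which is strongly polynomial-time solvable.

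The main conceptual step is spotting and validating the dualization of the $q \leq 0$ case: one has to check that the substitution $Y_2 := V \setminus X_2$ both flips the sign of the modular term $q(X_1 \cap X_2)$ and preserves the valuated matroid structure of the two ``main'' cost pieces. After that is established, the reduction to the $q \geq 0$ case, and hence to $({\rm V}^n(w))$, is routine.
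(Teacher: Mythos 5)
Your proposal is correct, and for the case $q\ge 0$ it coincides with the paper's argument: \eqref{eq:copic} with valuated-matroid costs is read off directly as an instance of $({\rm V}^n(w))$ with $n=2$ and $w:=q\ge 0$, handled in Section~\ref{sec:VMI}. For the case $q\le 0$, however, you take a genuinely different route. The paper treats this case as a special instance of $({\rm M}_{\geq k}(w))$ with $w\le 0$ and $k=0$, i.e., via the reduction to M${}^\natural$-convex submodular flow of Section~\ref{sec:MCSF}; you instead dualize the second matroid, substituting $Y_2:=V\setminus X_2$ and using the modularity identity $q(X_1\cap X_2)=q(X_1)-q(X_1\cap Y_2)$ to flip the sign of the interaction term, which lands you back in $({\rm V}^n(w))$ with nonnegative added weight. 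Both reductions are valid (adding a modular function to, and dualizing, a valuated matroid are standard M-convexity-preserving operations that the paper itself uses elsewhere, e.g.\ $\overline{\omega_2}$ in Section~\ref{sec:VIAP}). Your route has a concrete advantage: the submodular-flow machinery invoked by the paper is only stated to run in weakly polynomial time (Theorem~\ref{thm:P}), so the ``strongly polynomial'' claim for $q\le 0$ is better supported by your reduction, which stays entirely within the strongly polynomial valuated matroid intersection framework. The trade-off is that the paper's route generalizes beyond the $\{0,1\}^V$ setting to polymatroidal domains, where your complementation trick is not available.
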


As mentioned above, Chassein--Goerigk's algorithm for the problem~\eqref{eq:copic} corresponding to the 2-min-max-min robustness
is based on the ellipsoid method
and hence not combinatorial. 
In contrast,
our algorithm 
provides a combinatorial algorithm for this case of the problem \eqref{eq:copic}. 
More generally,
our result leads to
the first combinatorial algorithm for the 
2-min-max-min robustness~\eqref{eq:2-MMM} 
with $\mathcal{W}$ 
in
the form~\eqref{eq:W}.

\subsection{Socially optimal states in valuated matroid congestion games}
\label{sec:congestion}

We finally present an application of $({\rm V}^n(w))$ in \emph{congestion games} \cite{Ros73}, 
a class of noncooperative games in game theory. 
A congestion game  is represented by a tuple $(N,V,(\B_i)_{i \in N}, (c_v)_{v \in V} )$, 
where 
$N = \{1,2, \ldots, n\}$ is a set of players, 
$V$ is a set of resources, 
$\B_i \subseteq 2^V$ is the set of strategies of a player $i \in N$, 
and 
$c_v\colon \Z_+ \to \R_+$ is a nondecreasing cost function 
associated with a resource $v \in V$.
A \emph{state} $\X=(X_1,X_2,\ldots, X_n)$ is a collection of 
strategies of all players, 
i.e.,\  
$X_i \in \B_i$ for each $i \in N$. 
For a state $\X=(X_1,X_2,\ldots, X_n)$, 
let $x^{(v)}(\X)$ denote the number of players using $v$, 
i.e.,\ 
$x^{(v)}(\X)=|\{ i \in N \mid v \in X_i \}|$. 
If $\X$ is clear from the context, 
$x^{(v)}(\X)$ is abbreviated as $x^{(v)}$. 
In a state $\X$, 
every player using a resource $v \in V$ should pay 
$c_v(x^{(v)})$ to use $v$, 
and thus the total cost paid by a player $i \in N$ is $\sum_{v \in X_i}c_v(x^{(v)})$. 
In a \emph{player specific-cost} model, the cost paid by a player $i \in N$ for using $v\in V$ is represented by a function $c_{i,v}\colon \Z_+ \to \R_+$, which may vary with each player.

The importance of congestion games is appreciated through the fact 
that the class of congestion games coincides with that of \emph{potential games}. 
Rosenthal~\cite{Ros73} proved that every congestion game is a potential game, 
and conversely, 
Monderer and Shapley~\cite{MS96} proved that 
every potential game is represented by a congestion game with the same potential function.

Here we show that, 
in a certain generalized model of 
matroid congestion games
with player-specific costs, 
computing 
a socially optimal state 
reduces to (a variant of) $({\rm V}^n(w))$.  
A state $\X^*=(X_1^*,X_2^*,\ldots, X_n^*)$ is called \emph{socially optimal} if 
the sum of the costs paid by all the players is minimum, 
i.e.,\ 
\begin{align*}
    \sum_{i \in N}\sum_{v \in X^*_i}c_v(x^{(v)}(\X^*)) \le \sum_{i \in N}\sum_{v \in X_i}c_v(x^{(v)}(\X))
\end{align*}
for any state $\X=(X_1,X_2,\ldots,X_n)$. 
In a \emph{matroid congestion game},  
the set $\B_i \subseteq 2^V$ of the strategies of each player $i\in N$ is 
the base family of a matroid on $V$. 
A socially optimal state in matroid congestion games can be computed in polynomial time 
if the cost functions are \emph{weakly convex}~\cite{ARV08,WS00}, 
while 
it is NP-hard for general nondecreasing cost functions~\cite{ARV08}. 
A function $c \colon \Z_+ \to \R$ is called \emph{weakly convex} if 
$(x+1)\cdot c(x+1) - x\cdot c(x)$ is nondecreasing for each $x \in \Z_+$. 

We consider the following generalized model of congestion games with player-specific costs. 
In a state $\X=(X_1,X_2, \ldots, X_n)$, 
the cost paid by a player $i \in N$ is 
\begin{align}
\label{EQcost}
    \omega_i(X_i) + \sum_{v \in X_i}d_v (x^{(v)}), 
\end{align}
where 
$\omega_i \colon 2^V \to \R_+$ is a monotone set function 
and 
$d_v\colon \Z_+ \to \R_+$ is a nondecreasing function for each $v \in V$. 
This model represents a situation where 
a player $i \in N$ should pay $\omega_i(X_i)$ regardless of the strategies of the other players, 
as well as 
$d_v(x^{(v)})$ for every resource $v \in X_i$, 
which is an additional cost resulting from the congestion on $v$. 
It is clear that 
the standard model of congestion games is a special case 
where 
$\omega_i(X_i)=\sum_{v \in X_i}c_v(1)$ for every $i \in N$ 
and every $X_i \in \B_i$, 
and 
$$d_v(x) = 
\begin{cases}
0 & (x=0), \\
c_v(x) - c_v(1) &(x\ge 1).
\end{cases}
$$

In this model, 
the sum of the costs paid by all the players is equal to
\begin{align}
\label{EQsociallyopt}
\sum_{i \in N}\omega_i(X_i) + \sum_{v\in V}x^{(v)} \cdot d_v(x^{(v)}).
\end{align}
The following lemma is straightforward to see. 
\begin{lemma}\label{lem:convex}
The following are equivalent. 
\begin{itemize}
    \item $c_v$ is weakly convex. 
    \item $d_v$ is weakly convex. 
    \item $x \cdot d_v$ is discrete convex. 
\end{itemize}
\end{lemma}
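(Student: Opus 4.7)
The plan is to unfold the three properties into elementary inequalities and then observe that they all reduce to the same monotonicity condition on the forward difference of $x \cdot d_v(x)$.

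First I would record the relationship between $c_v$ and $d_v$: by definition $d_v(x) = c_v(x) - c_v(1)$ for $x \geq 1$, and $d_v(0) = 0$. In particular $d_v(1) = 0$ as well. A direct computation then gives, for every $x \in \Z_+$,
\[
(x+1)\,d_v(x+1) - x\,d_v(x) \;=\; (x+1)\,c_v(x+1) - x\,c_v(x) - c_v(1).
\]
For $x \geq 1$ this is immediate from $d_v(y) = c_v(y) - c_v(1)$ (the $-c_v(1)$ terms combine to $-c_v(1)$); the case $x = 0$ needs to be checked separately, but here the left-hand side is $d_v(1) = 0$ and the right-hand side is $c_v(1) - c_v(1) = 0$, so the identity still holds. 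Since the two sides differ by the constant $c_v(1)$, one is nondecreasing in $x$ precisely when the other is, which yields the equivalence between weak convexity of $c_v$ and weak convexity of $d_v$.

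Next I would handle the equivalence with discrete convexity of $h(x) := x \cdot d_v(x)$. The forward difference satisfies
\[
h(x+1) - h(x) \;=\; (x+1)\,d_v(x+1) - x\,d_v(x),
\]
and discrete convexity of $h$ is by definition $h(k+1) + h(k-1) \geq 2 h(k)$ for all $k \geq 1$, which rearranges to $h(k+1) - h(k) \geq h(k) - h(k-1)$. Thus discrete convexity of $h$ is exactly the statement that $x \mapsto (x+1)d_v(x+1) - x d_v(x)$ is nondecreasing, i.e., weak convexity of $d_v$.

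There is no substantial obstacle: the argument is a chain of identities and a rearrangement of the convexity inequality. The only subtle point is the boundary $x=0$ in the first identity, which is handled cleanly by the observation $d_v(1) = 0$; flagging this boundary check is enough to make the proof watertight.
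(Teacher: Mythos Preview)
Your argument is correct. The paper does not actually supply a proof of this lemma; it merely states that it is ``straightforward to see,'' so there is no detailed paper proof to compare against, and your short computation is precisely the kind of verification the authors are implicitly leaving to the reader.
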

It follows from Lemma~\ref{lem:convex} that,
if $c_v$ (or $d_v$) is weakly convex,
then
the function $\sum_{v\in V}x_v \cdot d_v(x^{(v)})$
is laminar convex.

The solution for $({\rm V}^n(w))$,
or 
the DCA perspective for~$({\rm V}^n(w))$,
provides a new insight on this model of cost functions in matroid congestion games. 
In addition to the weak convexity of $d_v$ $(v \in V)$, 
this model allows us to 
introduce some convexity of the cost function $\omega_i$. 
Namely, 
we can assume that $\omega_i$ is a valuated matroid for every $i \in N$. 
Then, 
computing the optimal state, 
i.e., 
minimizing~\eqref{EQsociallyopt}, is naturally viewed as
valuated matroid intersection problem
for the valuated matroid $\sum_{i \in N} \omega_i(X_i)$ and
the laminar convex function $\sum_{v\in V}x_v \cdot d_v(x_v)$
as in $({\rm V}^n(w))$.
Thus it can be done in polynomial time. 

\begin{theorem}
In a matroid congestion game in which 
each player's cost is represented by 
\eqref{EQcost}, 
the socially optimal state can be 
computed in strongly polynomial time 
if 
$\omega_i$ is a valuated matroid 
for each player $i \in N$ 
and 
$d_v$ is weakly convex for each resource $v \in V$. 
\end{theorem}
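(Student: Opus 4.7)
The plan is to reduce computing a socially optimal state to a single valuated matroid intersection instance, closely following the reduction of $({\rm V}^n(w))$ in Section~\ref{sec:VMI}. First I would let $\tilde{V} = \bigcup_{i \in [n]} V_i$ be the disjoint union of $n$ copies of $V$ and identify each state $\mathcal{X} = (X_1, X_2, \ldots, X_n)$ with the indicator vector in $\{0,1\}^{\tilde{V}}$ of the set $(X_1, X_2, \ldots, X_n) \subseteq \tilde{V}$. Taking $\tilde{\omega}$ to be the disjoint sum $\tilde{\omega}(X_1, \ldots, X_n) := \sum_{i \in [n]} \omega_i(X_i)$ yields a valuated matroid on $2^{\tilde{V}}$ of rank $r := \sum_{i \in [n]} r_i$, where $r_i$ is the rank of $\omega_i$.

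Next I would encode the congestion term of~\eqref{EQsociallyopt} as a laminar convex function on $\Z^{\tilde{V}}$. For each $v \in V$, define $g_v \colon \Z \to \R \cup \{+\infty\}$ by $g_v(y) := y \cdot d_v(y)$ for $0 \leq y \leq n$ and $g_v(y) := +\infty$ otherwise. Lemma~\ref{lem:convex} tells us that the weak convexity of $d_v$ makes $g_v$ discrete convex. Since $\{\{v^1, v^2, \ldots, v^n\} \mid v \in V\}$ is a laminar family on $\tilde{V}$, the function
\begin{align*}
G(x) := \sum_{v \in V} g_v\!\left(\sum_{i \in [n]} x(v^i)\right)
\end{align*}
is laminar convex, and in particular $\mathrm{M}^{\natural}$-convex, on $\Z^{\tilde{V}}$. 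For the indicator vector $x$ of $(X_1, \ldots, X_n)$, the identity $\sum_{i} x(v^i) = x^{(v)}(\mathcal{X})$ gives $G(x) = \sum_{v \in V} x^{(v)} \cdot d_v(x^{(v)})$, so~\eqref{EQsociallyopt} equals $\tilde{\omega}(X_1, \ldots, X_n) + G(x)$.

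Finally, by Lemma~\ref{lem:MS} the restriction of $G$ to the hyperplane $\{x \in \Z^{\tilde{V}} \mid \sum_{v \in \tilde{V}} x(v) = r\}$ is an M-convex function of rank $r$, and because its effective domain sits inside $\{0,1\}^{\tilde{V}}$ it is in fact a valuated matroid on $2^{\tilde{V}}$. Since $\dom \tilde{\omega}$ already enforces $\sum_i |X_i| = r$, minimizing $\tilde{\omega} + G$ over $2^{\tilde{V}}$ coincides with the valuated matroid intersection problem for $\tilde{\omega}$ and this restriction of $G$; by Lemma~\ref{lem:VMI} this is solvable in strongly polynomial time, and any minimizer $(X_1, \ldots, X_n)$ recovers a socially optimal state. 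The main delicate point is the bridging step that $g_v(y) = y \cdot d_v(y)$ is discrete convex: this is exactly what Lemma~\ref{lem:convex} supplies from the weak convexity of $d_v$, and it is precisely what lets the laminar convex framework of Section~\ref{sec:VMI} kick in.
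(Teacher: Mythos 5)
Your proposal is correct and follows essentially the same route as the paper: the paper likewise treats the objective \eqref{EQsociallyopt} as a valuated matroid intersection between the disjoint sum $\tilde{\omega}$ and the laminar convex function $\sum_{v\in V} x^{(v)}\cdot d_v(x^{(v)})$, using Lemma~\ref{lem:convex} for discrete convexity of $y\mapsto y\cdot d_v(y)$ and then the machinery of Section~\ref{sec:VMI} (Lemmas~\ref{lem:MS} and~\ref{lem:VMI}). You have simply written out in full the details that the paper leaves as a sketch.
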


\section{Discussions}\label{sec:discussion}
In this paper, 
we have 
presented
several types of 
minimization of the sum of valuated matroids (or M-convex functions) under intersection constraints.
We here consider 
the following another natural generalization of $({\rm V}_{=k})$, 
where 
$\omega_1$ and $\omega_2$ are valuated matroids on $2^V$,
$w$ is a weight function on $V$, 
and $k$ is a nonnegative integer: 
\begin{align*}
({\rm V}_{=k}(w)) \qquad
\begin{array}{ll}
\text{Minimize} \quad& \omega_1(X_1) + \omega_2(X_2) + w(X_1 \cap X_2)\\
\text{subject to} \quad& |X_1 \cap X_2| = k. 
\end{array}
\end{align*}

The problem $({\rm V}_{=k}(w))$
is similar to $\VIAP(k)$, 
but 
is essentially different. 
A problem 
that is similar to $({\rm V}_{=k}(w))$ and 
can be formulated by 
$\VIAP(k)$ is the following: 
\begin{align}
\label{eq:21}
\begin{array}{ll}
\text{Minimize} \quad& \omega_1(X_1) + \omega_2(X_2) + w(F)\\
\text{subject to} \quad& F \subseteq X_1 \cap X_2,\\
&|F| = k. 
\end{array}
\end{align}
The difference 
between 
the problems $({\rm V}_{=k}(w))$ and \eqref{eq:21}
is that 
$|X_1 \cap X_2|$
should be exactly equal to $k$ and all elements in $X_1 \cap X_2$ affect the objective value in $({\rm V}_{=k}(w))$, 
whereas 
$|X_1 \cap X_2|$ is just required to be at least $k$ and only $k$ elements in
$X_1 \cap X_2$ affect the objective value in \eqref{eq:21}.

While 
$\VIAP(k)$ (and hence \eqref{eq:21}) can be solved in polynomial time, 
the complexity of $({\rm V}_{=k}(w))$ is open 
even when the cardinality constraint $|X_1 \cap X_2| = k$ is removed
and 
$\omega_1$ and $\omega_2$
are modular functions on the base families of some matroids.
For $({\rm V}_{=k}(w))$, 
only the following cases are known to be tractable:
\begin{itemize}
\item 
If 
$w$ is identically zero, then 
$({\rm V}_{=k}(w))$ is equivalent to $({\rm V}_{=k})$.
\item 
If $w \geq 0$
and
the cardinality constraint $|X_1 \cap X_2| = k$ is removed, 
then 
$({\rm V}_{=k}(w))$ is a subclass of $({\rm V}^n(w))$ with $w \ge 0$.
\item
If $w \leq 0$
and
$|X_1 \cap X_2| = k$ is replaced by $|X_1 \cap X_2| \geq k$,
then $({\rm V}_{=k}(w))$
is a subclass of $({\rm M}_{\geq k}(w))$ 
with $w \le 0$
\item 
If 
$|X_1 \cap X_2| = k$ is removed
and 
$\omega_1$ and $\omega_2$
are the indicator functions of the base families of some matroids, 
then 
$({\rm V}_{=k}(w))$
has been dealt with Lendl et al.~\cite{LCP19};
see~Section~\ref{subsec:COPIC}.
\end{itemize}

Another possible direction of research would be to generalize our framework so that 
it includes computing the socially optimal state of 
\emph{polymatroid congestion games}~\cite{HKP18,Tak19}, 
as 
we have done for matroid congestion games in~Section~\ref{sec:congestion}. 
Polymatroid congestion games offer a model generalizing matroid congestion games where the usage of a resource by a player may not be binary, 
and its multiplicity can be represented by a nonnegative integer. In this model, the sum of the costs paid by all players for a resource $v$ may no longer be represented as $x^{(v)} \cdot d_v(x^{(v)})$ as in \eqref{EQsociallyopt}, 
because the number of players using $v$ may not be equal to the multiplicity of the usage of $v$.

\section*{Acknowledgements}
We thank Andr\'{a}s Frank and
Kazuo Murota
for careful reading and numerous helpful comments
and
Marc Goerigk
for bibliographical information on~\cite{DAM/CG20}.
We also thank
the referees for helpful comments.
The first author was supported by JSPS KAKENHI Grant Numbers JP17K00029, JP19J01302, 20K23323, 20H05795, Japan.
The second author was supported by JSPS KAKENHI Grant Numbers JP16K16012, JP26280004,
JP20K11699, Japan.
This is a post-peer-review, pre-copyedit version of an article published in Mathematical Programming. The final authenticated version is available online at:  \url{https://doi.org/10.1007/s10107-021-01625-2}.

\appendix

\section{Algorithm description and complexity analyses}\label{appendix:algorithm description}
In this section,
we describe the full behavior of the algorithm for $({\rm V}_{\ge k})$. 
We then analyse the time complexities of our algorithms 
for $({\rm V}_{\ge k})$ and $({\rm V}_{= k})$ to prove  Theorem \ref{thm:V=k}. 

Suppose that,
for some nonnegative integer $i < k$,
we have at hand an optimal solution $(X_1, X_2)$ for~$({\rm V}_{\geq i})$ and its optimality witness $(p_1, p_2, F)$;
recall that the optimality condition has been described in Lemma~\ref{lem:witness}.
We then find an optimal solution for 
$({\rm V}_{\geq i+1})$ and its optimality witness 
by utilizing an auxiliary digraph
$\vec{G}=(V_1 \cup V_2 \cup \{s, t\}, A)$
endowed with a nonnegative arc-length function $\ell$ on $A$
defined 
from $(X_1, X_2)$ and $(p_1, p_2, F)$ 
as follows.
Here $s$ and $t$ are new vertices, 
which play 
roles as a source vertex and sink vertex,
respectively.
The arc set $A$ is defined by
\begin{align*}
    A := \vec{E} \cup \vec{F} \cup A_1 \cup A_2 \cup S \cup T,
\end{align*}
where
\begin{align*}
    \vec{E} &:= \{ (v^1, v^2) \mid v \in V \},\\
    \vec{F} &:= \{ (v^2, v^1) \mid v \in F \},\\
    A_1 &:= \{ (u^1, v^1) \mid u^1 \in X_1,\ v^1 \in V_1 \setminus X_1,\ X_1 \setminus \{u^1\} \cup \{v^1\} \in \dom \omega_1 \},\\
    A_2 &:= \{ (v^2, u^2) \mid u^2 \in X_2,\ v^2 \in V_2 \setminus X_2,\ X_2 \setminus \{u^2\} \cup \{v^2\} \in \dom \omega_2 \},\\
    S &:= \{ (s, v^1) \mid v^1 \in X_1 \setminus X_2 \},\\
    T &:= \{ (v^2, t) \mid v^2 \in X_2 \setminus X_1 \}.
\end{align*}
Define the arc-length function 
$\ell:A\to \R$
by
\begin{align*}
    \ell(a) :=
    \begin{cases}
    (\omega_1 - p_1)(X_1 \setminus \{u^1\} \cup \{v^1\}) - (\omega_1 - p_1)(X_1) & \text{if $a = (u^1, v^1) \in A_1$},\\
    (\omega_2 + p_2)(X_2 \setminus \{u^2\} \cup \{v^2\}) - (\omega_2 + p_2)(X_2) & \text{if $a = (v^2, u^2) \in A_2$},\\
    0 & \text{if $a \in A \setminus (A_1 \cup A_2)$}.
    \end{cases}
\end{align*}
We remark that $\ell$ is nonnegative since $X_1$ and $X_2$ are minimizers of $\omega_1 - p_1$ and $\omega_2 + p_2$, respectively.

The augmenting path algorithm for~$({\rm V}_{\geq k})$ runs in the auxiliary digraph as follows:
\begin{description}
	\item[Step 1:]
	Let $X_1$ and $X_2$ be the minimizers of $\omega_1$ and $\omega_2$,
respectively.
\begin{itemize}
    \item
    If $|X_1 \cap X_2| \geq k$,
	then output $(X_1,X_2)$ and stop.
	\item
	Otherwise, let $k' := |X_1 \cap X_2| < k$
	and let $p_1, p_2$ be potential functions defined by $p_1(v^1) = p_2(v^2) = 0$ for all $v \in V$.
	Then $(X_1, X_2)$ is an optimal solution for
	$({\rm V}_{\geq k'})$
	with optimality witness $(p_1, p_2, X_1 \cap X_2)$.
\end{itemize}
	\item[Step 2:]
	While 
	$i=|X_1 \cap X_2| < k$,
	do the following.

	Suppose here that
	$(X_1, X_2)$ is an optimal solution for $({\rm V}_{\geq i})$
	with optimality witness $(p_1, p_2, X_1 \cap X_2)$.
	Let $\vec{G}$ be the auxiliary digraph for $(X_1, X_2)$ with $(p_1, p_2, X_1 \cap X_2)$.
	\begin{itemize}
	    \item
	    If there is no $s$-$t$ path in $\vec{G}$,
	    then output ``$({\rm V}_{\geq k})$ is infeasible'' and stop.
	    \item
	    Otherwise, find a shortest $s$-$t$ path $P$ in $\vec{G}$ with respect to the arc length $\ell$ such that the number of edges is smallest among the shortest $s$-$t$ paths.
	    For $x \in V_1 \cup V_2 \cup \{t\}$,
	    let $d(x)$ be the length of the shortest $s$-$x$ path 
	    with respect to $\ell$ 
	    in $\vec{G}$,
	    where $d(x) := +\infty$ if there is no $s$-$x$ path.
	    Update $X_1$, $X_2$, $p_1$, and $p_2$ by
	    \begin{align*}
	        X_1 \ &\leftarrow\ X_1 \setminus (P \cap X_1) \cup (P \cap (V_1 \setminus X_1)),\\
	        X_2 \ &\leftarrow\ X_2 \setminus (P \cap X_2) \cup (P \cap (V_2 \setminus X_2)),\\
	        p_1(v^1) \ &\leftarrow\ p_1(v^1) + \min \{d(v^1), d(t) \} \quad \text{for $v^1 \in V_1$},\\
	        p_2(v^2) \ &\leftarrow\ p_2(v^2) - \min \{ d(v^2), d(t) \} \quad \text{for $v^2 \in V_2$}.
	    \end{align*}
	    Then, 
        the resulting $X_1$, $X_2$, $p_1$, and $p_2$ satisfy that $|X_1 \cap X_2| = i + 1$
        and 
	    $(X_1,X_2)$ is an optimal solution for $({\rm V}_{\geq i+1})$
	    with optimality witness $(p_1, p_2, X_1 \cap X_2)$.
	\end{itemize}
	\item[Step 3:]
	Output $(X_1, X_2)$ and stop.
\end{description}

We are ready to prove Theorem~\ref{thm:V=k}.
We first see the time complexity of the algorithm for $({\rm V}_{\geq k})$.
We can obtain minimizers of $\omega_1$ and $\omega_2$ in $O(|V| r \gamma)$ time~\cite{AML/DW90},
and hence 
Step~1 can be done in $O(|V| r \gamma)$ time.
Consider each iteration in Step~2.
We can construct the auxiliary graph $\vec{G}$ in $O(|V|r\gamma)$ time.
Indeed,
the number of edges in $\vec{G}$ is $O(|V|r)$: 
$|\vec{E}|$ is $O(|V|)$, $|\vec{F}|$, $|S|$, and $|T|$ are $O(|r|)$,
$|A_1|$ is $O(r_1(|V| - r_1)) = O(|V|r)$, 
and
$|A_2|$ is $O(r_2(|V| - r_2)) = O(|V|r)$,
where $r_1$ and $r_2$ are the ranks of $\omega_1$ and $\omega_2$,
respectively.
Furthermore the arc-length function $\ell$ can be computed in $O((|A_1| + |A_2|) \gamma) = O(|V|r \gamma)$ time.
Then we can compute the length $d(x)$ of the shortest $s$-$x$ path for each $x \in V_1 \cup V_2 \cup \{s\}$
in $O(|V|r + |V| \log |V|)$ time
by using Dijkstra's algorithm with Fibonacci heaps~\cite{JACM/FT87} (see also~\cite[Section~7.4]{book/Schrijver03}).
The update of $(X_1, X_2)$ and $(p_1, p_2)$ 
requires
$O(|V|)$ time.
Therefore
each iteration in Step~2 takes $O(|V|r\gamma + |V| \log |V|)$ time.
Since the number of iterations is at most $k$,
Step~2 can be done in $O(|V|r k \gamma + |V| k \log |V|)$ time.
Thus the running-time of the algorithm for $({\rm V}_{\geq k})$ is $O(|V|r k \gamma + |V| k \log |V|)$.

We then consider the time complexity of the algorithm for $({\rm V}_{= k})$.
Let $X_1$ and $X_2$ be minimizers of $\omega_1$ and $\omega_2$,
respectively,
which can be obtained in $O(|V| r \gamma)$ time.
If $|X_1 \cap X_2| \leq k$,
then the algorithm for $({\rm V}_{= k})$ is the same as that for $({\rm V}_{\geq k})$,
and hence
the running-time is $O(|V|r k \gamma + |V| k \log |V|)$. 
If $|X_1 \cap X_2| > k$,
we solve$({\rm V}_{\geq r_1 - k})$ for $\omega_1$ and $\overline{\omega_2}$.
By the same argument as for the algorithm for $({\rm V}_{\geq k})$,
one can see that the running-time of $({\rm V}_{\geq r_1 - k})$ is $O(|V|r (r_1-k) \gamma + |V| (r_1-k) \log |V|)$.
Altogether,
$({\rm V}_{= k})$ can be solved in $O(|V|r^2 \gamma + |V| r \log |V|)$ time.

\section{Anothor solution to $({\rm V}_{= k})$}\label{appendix:Laszlo}
This section provides another solution to $({\rm V}_{= k})$,
which is mentioned at the end of Section~\ref{subsec:strongly poly-time}.
We first solve $({\rm V}_{\leq k})$ and $({\rm V}_{\geq k})$.
The former is a special case of $({\rm V}_{\I}^n)$ in which $n = 2$ and $\I$ is the independent set family of the uniform matroid with rank $k$,
and can be solved in $O(|V|r^2 \gamma + |V| r \log |V|)$ time by Theorem~\ref{thm:VI}.
The latter can also be solved in $O(|V|r^2 \gamma + |V| r \log |V|)$ time by 
Theorem~\ref{thm:V=k}
(or the reduction to valuated matroid intersection).
If the output optimal solution $(X_1, X_2)$ of $({\rm V}_{\leq k})$ satisfies $|X_1 \cap X_2| = k$
or that of $(X_1', X_2')$ of $({\rm V}_{\geq k})$ satisfies $|X_1' \cap X_2'| = k$,
then we are done.

Otherwise, 
we have that $X_1$ and $X_1'$ are minimizers of $\omega_1$, and $X_2$ and $X_2'$ are those of $\omega_2$.
Indeed, suppose, to the contrary, that $X_1$ is not a minimizer of $\omega_1$.
Then there are $u \in X_1$ and $v \in V \setminus X_1$ such that $\omega_1(X_1 \setminus \{u\} \cup \{v\}) < \omega_1(X_1)$.
By $|X_1 \cap X_2| < k$,
it follows that
$|(X_1 \setminus \{u\} \cup \{v\}) \cap X_2| \leq k$.
Therefore, 
$(X_1 \setminus \{u\} \cup \{v\}, X_2)$ is feasible and $\omega_1(X_1 \setminus \{u\} \cup \{v\}) + \omega_2(X_2) < \omega_1(X_1) + \omega_2(X_2)$, 
which
contradicts the optimality of $(X_1, X_2)$.
By the same argument, 
we can conclude that
$X_1'$ is also a minimizer of $\omega_1$
and $X_2, X_2'$ are minimizers of $\omega_2$.

Let $p = |X_1' \setminus X_1|$ and 
$q= |X_2' \setminus X_2|$.
Since the set of minimizers of a valuated matroid
forms a base family of some matroid,
there is a sequence 
$(X_1^0, X_1^1, \dots, X_1^p)$
of minimizers of $\omega_1$
such that
$X_1^0 = X_1$ and
$|X_1' \setminus X_1^{i}| = p-i$ for
each $i = 0,1,\dots, p$.
Note 
that
$|X_1^{i-1} \cap X_2| - 1 \leq |X_1^i \cap X_2| \leq |X_1^{i-1} \cap X_2| + 1$.
If 
$|X_1^i \cap X_2| = k$ holds for some $i \in [p]$,
then output 
$(X_1^i, X_2)$.
Otherwise, we have that
$|X_1' \cap X_2| < k$. 
Then we can
similarly consider 
a sequence $(X_2^0, X_2^1, \dots, X_2^q)$
of minimizers of $\omega_2$, 
where $X_2^0 = X_2$
and $|X_2' \setminus X_2^{j}| = q-j$ for
each $j = 0,1,\dots,q$.
It then follows from $|X_1' \cap X_2| < k$ and $|X_1' \cap X_2'| > k$ that
there is $X_2^j$
with $|X_1' \cap X_2^j| = k$.
We thus output 
$(X_1' ,X_2^j)$
as an optimal solution.

\section{The algorithm for $({\rm W}_{=k})$ by Lendl et al.}\label{appendix:LPT}
The primal-dual algorithm of Lendl et al.~\cite{arxiv/LPT19} is described as follows.
Let $X_1 \in \argmin_{X \in \B_1} w_1(X)$ and $X_2 \in \argmin_{X \in \B_2} w_2(X)$,
respectively. 
\begin{description}
    \item[Case 1 ($|X_1 \cap X_2| \leq k$):]
    Let $k' := |X_1 \cap X_2|$
    and
    let $q_1, q_2$ be 
    potential functions
    defined by $q_1(v^1) = q_2(v^2) = 0$ for all $v \in V$.
    
    While $|X_1 \cap X_2| < k$,
    do the following.
    Let $\vec{G}$ be the auxiliary graph for $(X_1, X_2)$ with its LPT optimality witness $(q_1, q_2)$.
    \begin{itemize}
        \item
        Suppose that there is a zero length $s$-$t$ path in $\vec{G}$ with respective to $\ell$.
        Then 
        let $P$ be a zero length $s$-$t$ path 
        with the smallest number of edges, and update 
        $X_1$ and $X_2$ by
        \begin{align*}
	        X_1 &\leftarrow X_1 \setminus (P \cap X_1) \cup (P \cap (V_1 \setminus X_1)),\\
	        X_2 &\leftarrow X_2 \setminus (P \cap X_2) \cup (P \cap (V_2 \setminus X_2)).
	    \end{align*}
        \item
        Suppose that there is no zero length $s$-$t$ path in $\vec{G}$.
        Let $R$ be the set of vertices reachable from $s$ with a zero length path,
        and $\delta := \min \{ \ell(a) \mid a \in A_1 \cup A_2,\ \ell(a) > 0 \}$.
        
        If $\delta = +\infty$,
        then output ``$({\rm W}_{=k})$ is infeasible.''
        Otherwise
        update
        $q_1$ and $q_2$ by
        \begin{align*}
	        q_1(v^1) &\leftarrow
	        \begin{cases}
	        q_1(v^1) + \delta & \text{if $v^1 \in V_1 \cap R$},\\
	        q_1(v^1) & \text{if $v^1 \in V_1 \setminus R$},
	        \end{cases}\\
	        q_2(v^2) &\leftarrow
	        \begin{cases}
	        q_2(v^2) & \text{if $v^2 \in V_2 \cap R$},\\
	        q_2(v^2) - \delta & \text{if $v^2 \in V_2 \setminus R$}.
	        \end{cases}
	    \end{align*}
    \end{itemize}
	
	\item[Case 2 ($|X_1 \cap X_2| > k$):]
	Let $r_1$ be the rank of $M_1 = (V, \B_1)$, and let $\overline{w_2} := - w_2$ and $\overline{M_2} := (V, \{ V \setminus B \mid B \in \B_2 \})$.
	Then
	apply Case 1 to $({\rm W}_{= r_1 - k})$ with $(w_1, \overline{w}_2; M_1, \overline{M_2})$. 
\end{description}

We discuss the difference 
of this algorithm and our algorithm 
in the
updating procedures.
The algorithm of Lendl et al.\ 
only considers the zero length edges in the 
auxiliary digraph $\vec{G}$
in updating a solution or 
potential functions,
while our algorithm considers all edges in $\vec{G}$ 
to find the shortest paths with respect to $\ell$.
Furthermore, 
as mentioned in Section \ref{SEC:LPT}, 
in the algorithm of Lendl et al.,\ 
the update phases of a solution
and of 
potential functions
are completely separated, 
while our algorithm 
simultaneously
updates a solution and 
potential functions. 
That is,
for
one update of a solution,
the algorithm of Lendl et al.\ requires
at most $|V|$ updates of 
potential functions, 
while our algorithm requires only one update of 
potential functions.

The difference 
in
the running-times of the algorithms
follows from the above arguments.
Here we remark that, in $({\rm W}_{\geq k})$,
we can compute the edge length $\ell(a)$ of $a$ in constant time for each $a$.
In 
the algorithm of Lendl et al.,
one update of a solution takes $O(|V|^2 r)$ time,
since each 
update of potential functions
requires $O(|V|r)$ time.
Hence the time complexity of 
their
algorithm
is $O(|V|^2 r^2)$.
On the other hand,
our algorithm requires $O(|V|r + |V| \log |V|)$ time 
for one update of a solution.
Thus, 
the running-time of our algorithm is 
$O(|V|r^2 + |V| r \log |V|)$ time by 
Theorem~\ref{thm:V=k} with $\gamma = O(1)$.

\end{document}